\newtheorem{thm}{Theorem}
\newenvironment{rethm}[1]
  {\innercustomthm}
  {\endinnercustomthm}
\newtheorem{lemma}[thm]{Lemma}
\newtheorem{prop}[thm]{Proposition}
\theoremstyle{definition}
\theoremstyle{remark}
\newenvironment{ex}{\refstepcounter{thm}\begin{proof}[Example \emph{\thethm}]}{\end{proof}}
\newenvironment{rem}{\refstepcounter{thm}\begin{proof}[Remark \emph{\thethm}]}{\end{proof}}
\newenvironment{defn}{\refstepcounter{thm}\begin{proof}[Definition \emph{\thethm}]}{\end{proof}}
\newenvironment{warn}{\refstepcounter{thm}\begin{proof}[Warning \emph{\thethm}]}{\end{proof}}
\numberwithin{equation}{section}
\colorlet{dark purple}{red!35!blue}
\colorlet{dark green}{green!70!black}
\colorlet{dark red}{red!80!black}
\colorlet{dark blue}{blue!80!black!80!cyan}
\tikzstyle{mutable}=[inner sep=0.5mm,circle,draw,minimum size=2mm]
\tikzstyle{frozen}=[inner sep=.9mm,rectangle,draw]
\tikzstyle{dot} = [fill=black!25,inner sep=0.5mm,circle,draw,minimum size=1mm]
\tikzstyle{marked}=[inner sep=0.5mm,circle,draw,blue!75!black,fill=blue!50]
\tikzstyle{projection line}=[dark red, line width=.02]
\title{Separating dots with circles}  
\author{James Beyer}
\address{Department of Mathematics \\ University of Oklahoma \\ Norman, OK 73019 \\ USA}
\email{jebeyerou{\char'100}gmail.com}
\author{Jaewon Min}
\address{Department of Mathematics \\ University of Illinois Urbana-Champaign \\ Urbana, IL 61801 \\ USA}
\email{jaewonm2{\char'100}illinois.edu}
\author{Greg Muller}
\address{Department of Mathematics \\ University of Oklahoma \\ Norman, OK 73019 \\ USA}
\email{gmuller{\char'100}ou.edu}
\keywords{spherical Voronoi diagram, higher order Voronoi diagram, plabic graph, cluster algebras}
\subjclass[2020]{
Primary 52B05, 
Secondary 68U05, 
05E15, 
13F60 
}
\begin{document}

\begin{abstract}
Given a finite set of points in general position in the plane or sphere, we count the number of ways to separate those points using two types of circles: circles through three of the points, and circles through none of the points (up to an equivalence). In each case, we show the number of circles which separate the points into subsets of size $k$ and $\ell$ is independent of the configuration of points, and we provide an explicit formula in each case.

We also consider how the circles change as the configuration of dots varies continuously. We show that an associated \emph{higher order Voronoi decomposition} of the sphere changes by a sequence of local `moves'. As a consequence, an associated cluster algebra is independent of the configuration of dots, and only depends on the number of dots and the order of the Voronoi decomposition.

%
%
\end{abstract}




\maketitle 




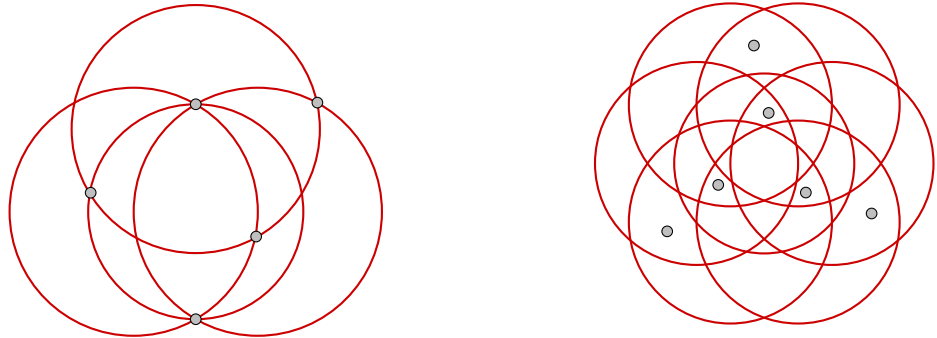
\begin{figure}[h!tb]
\captionsetup[subfigure]{justification=centering}
\centering
\begin{subfigure}[c]{0.45\linewidth}
\centering
    \begin{tikzpicture}[scale=1.1]
         \path[use as bounding box] (-2,-2) rectangle (2,1.3);
         \draw[thick,dark red] (-.75,-.5) circle (1.5);
         \draw[thick,dark red] (0,-.5) circle (1.3);
         \draw[thick,dark red] (.75,-.5) circle (1.5);
         \draw[thick,dark red] (0,.5) circle (1.5);
         \node[dot] at (0,.8) {};
         \node[dot] at (0,-1.8) {};
         \node[dot] at (1.47,.82) {};
         \node[dot] at (-1.27,-.27) {};
         \node[dot] at (.73,-.8) {};
    \end{tikzpicture}
\subcaption{Five dots in general position, and the four incident circles with one dot on either side}
\label{fig: incidentexample}
\end{subfigure}
\begin{subfigure}[c]{0.45\linewidth}
\centering
    \[
    \begin{tikzpicture}[scale=.9]
         \path[use as bounding box] (-2,-2.4) rectangle (2,1.3);
        \draw[thick,dark red] (60:1) circle (1.5);
        \draw[thick,dark red] (120:1) circle (1.5);
        \draw[thick,dark red] (180:1) circle (1.5);
        \draw[thick,dark red] (240:1) circle (1.5);
        \draw[thick,dark red] (300:1) circle (1.5);
        \draw[thick,dark red] (0:1) circle (1.5);
        \draw[thick,dark red] (0,0) circle (1.33);
        \node[dot] at (90+5:1.75) {};
        \node[dot] at (210+5:1.75) {};
        \node[dot] at (330+5:1.75) {};
        \node[dot] at (30-5:-.75) {};
        \node[dot] at (150-5:-.75) {};
        \node[dot] at (270-5:-.75) {};
    \end{tikzpicture}
    \]
\subcaption{Six dots in general position, and seven avoidant circles with three dots on either side}
\label{fig: avoidantexample}
\end{subfigure}
\caption{Incident and avoidant circles for two configurations of dots}
\label{fig: examples}
\end{figure}


\section{Summary of results}

Consider a distinguished set of finitely many points in a plane or sphere, which we will call \textbf{dots}.
We say these dots are in \textbf{general position} if no four dots lie on a circle and (in the case of the plane) no three dots lie on a line.
By these assumptions, any three dots lie on a unique circle, which we call an \textbf{incident circle}, and this incident circle separates the remaining dots into two sets.


Remarkably, if we count incident circles which have fixed numbers of dots on the two sides, the number does not depend on the configuration of the dots and is given by the following formula.

\begin{rethm}{A}\label{thm: incident}
Given at least three dots in general position in the plane or sphere, the number of circles through three of the dots which separate the remaining dots into subsets of size $k$ and $\ell$ is
    \[
    \left\{
    \begin{array}{cc}
    2(k+1)(\ell+1) &\text{if $k\neq \ell$} \\
    (k+1)^2 &\text{if $k= \ell$} \\
    \end{array}
    \right\}
    \]
\end{rethm}

\begin{rem}
Note that we do not distinguish between the two sides of the circle, even in the plane. If we did, the analogous counts may depend on the configuration of the dots; see Example \ref{ex: interiorcircles}.
\end{rem}


We also consider circles which pass through none of the dots, which we call \textbf{avoidant circles}. Of course, there are infinitely many such circles, but they only induce finitely many partitions of the dots. 
If we count such partitions with a fixed number of dots in the two parts, the number again does not depend on the configuration of the dots and is given by the following formula. 

\begin{rethm}{B}\label{thm: avoidant}
Given at least three dots in general position in the plane or sphere, the number of partitions of the dots into non-empty subsets of size $k$ and $\ell$ which can be separated by a circle is
    \[
    \left\{
    \begin{array}{cc}
    2k \ell - k - \ell + 2 &\text{if $k\neq \ell$} \\
    k^2 - k + 1 &\text{if $k= \ell$} \\
    \end{array}
    \right\}
    \]
\end{rethm}

\noindent If we say two avoidant circles are \emph{equivalent} if they induce the same partition of the dots, then this theorem can be interpreted as counting avoidant circles up to equivalence.



Our primary tool in the latter theorem is the \emph{$k$th order Voronoi decomposition} of the sphere, which is the partition into regions based on which $k$-element subset of the dots is closest.
When $k=1$, this is the classical \emph{Voronoi decomposition} into regions based on which dot is the closest.

Regions in the $k$th Voronoi decomposition of the sphere are in bijection\footnote{Except when there are exactly $2k$-many dots, in which case the correspondence is 2-to-1.} with avoidant circles that have $k$-many dots on one side (up to equivalence). Theorem \ref{thm: avoidant} then reduces to a count of these regions, which may be deduced from Theorem \ref{thm: incident} and an Euler characteristic argument.

\begin{rem}
The analogous argument does not work in the plane, since the $k$th order Voronoi regions in the plane correspond to equivalence classes of avoidant circles with $k$-many points in their interior, and the number of such circles can depend on the configuration of the dots.
Theorem \ref{thm: avoidant} is still valid in the plane, but we must deduce it from the spherical case via \emph{stereographic projection}.
\end{rem}

We also consider how the $k$th order Voronoi decomposition of the sphere (and thus the incident and avoidant circles) changes as the configuration of dots moves around. If we color the vertices of this decomposition black and white (based on the number of dots on each side of the corresponding incident circle), we can regard the $k$th order Voronoi decomposition as a bicolored graph in the sphere. 
%
%
As long as the deformation of the dots is `general enough', we show this bicolored graph will change by a sequence of the three local `moves' appearing in Figure \ref{fig: postnikovmoves} (Lemma \ref{lemma: move}). 

\begin{figure}[h!t]
\[
\begin{tikzpicture}[scale=.5,baseline=(a.base)]
        \draw[dashed] (0,0) circle (2);
        \clip (0,0) circle (2);
        \node[dot,thick,fill=white] (1) at (180:1) {};
        \node[dot,thick,fill=white] (3) at (0:1) {};

        \draw[thick] (1) to (3);
        \draw[thick] (1) to (-5,5);
        \draw[thick] (1) to (-5,-5);
        \draw[thick] (3) to (5,5);
        \draw[thick] (3) to (5,-5);

        \node[opacity=0] (a) at (0,0) {$a$};
\end{tikzpicture}
\leftrightarrow
\begin{tikzpicture}[scale=.5,baseline=(a.base)]
        \draw[dashed] (0,0) circle (2);
        \clip (0,0) circle (2);
        \node[dot,thick,fill=white] (2) at (90:1) {};
        \node[dot,thick,fill=white] (4) at (270:1) {};

        \draw[thick] (2) to (4);
        \draw[thick] (2) to (-5,5);
        \draw[thick] (4) to (-5,-5);
        \draw[thick] (2) to (5,5);
        \draw[thick] (4) to (5,-5);

        \node[opacity=0] (a) at (0,0) {$a$};
\end{tikzpicture}
\hspace{.75cm}
\begin{tikzpicture}[scale=.5,baseline=(a.base)]
        \draw[dashed] (0,0) circle (2);
        \clip (0,0) circle (2);
        \node[dot,thick,fill=black] (1) at (180:1) {};
        \node[dot,thick,fill=white] (2) at (90:1) {};
        \node[dot,thick,fill=black] (3) at (0:1) {};
        \node[dot,thick,fill=white] (4) at (270:1) {};

        \draw[thick] (1) to (2);
        \draw[thick] (2) to (3);
        \draw[thick] (3) to (4);
        \draw[thick] (4) to (1);
        \draw[thick] (1) to (-5,0);
        \draw[thick] (2) to (0,5);
        \draw[thick] (3) to (5,0);
        \draw[thick] (4) to (0,-5);

        \node[opacity=0] (a) at (0,0) {$a$};
\end{tikzpicture}
\leftrightarrow
\begin{tikzpicture}[scale=.5,baseline=(a.base)]
        \draw[dashed] (0,0) circle (2);
        \clip (0,0) circle (2);
        \node[dot,thick,fill=white] (1) at (180:1) {};
        \node[dot,thick,fill=black] (2) at (90:1) {};
        \node[dot,thick,fill=white] (3) at (0:1) {};
        \node[dot,thick,fill=black] (4) at (270:1) {};

        \draw[thick] (1) to (2);
        \draw[thick] (2) to (3);
        \draw[thick] (3) to (4);
        \draw[thick] (4) to (1);
        \draw[thick] (1) to (-5,0);
        \draw[thick] (2) to (0,5);
        \draw[thick] (3) to (5,0);
        \draw[thick] (4) to (0,-5);

        \node[opacity=0] (a) at (0,0) {$a$};
\end{tikzpicture}
\hspace{.75cm}
\begin{tikzpicture}[scale=.5,baseline=(a.base)]
        \draw[dashed] (0,0) circle (2);
        \clip (0,0) circle (2);
        \node[dot,thick,fill=black] (2) at (90:1) {};
       \node[dot,thick,fill=black] (4) at (270:1) {};

        \draw[thick] (2) to (4);
        \draw[thick] (2) to (-5,5);
        \draw[thick] (4) to (-5,-5);
        \draw[thick] (2) to (5,5);
        \draw[thick] (4) to (5,-5);

        \node[opacity=0] (a) at (0,0) {$a$};
\end{tikzpicture}
\leftrightarrow
\begin{tikzpicture}[scale=.5,baseline=(a.base)]
        \draw[dashed] (0,0) circle (2);
        \clip (0,0) circle (2);
        \node[dot,thick,fill=black] (1) at (180:1) {};
        \node[dot,thick,fill=black] (3) at (0:1) {};

        \draw[thick] (1) to (3);
        \draw[thick] (1) to (-5,5);
        \draw[thick] (1) to (-5,-5);
        \draw[thick] (3) to (5,5);
        \draw[thick] (3) to (5,-5);

        \node[opacity=0] (a) at (0,0) {$a$};
\end{tikzpicture}
\]
\caption{Three local moves between bicolored graphs in the sphere}
\label{fig: postnikovmoves}
\end{figure}
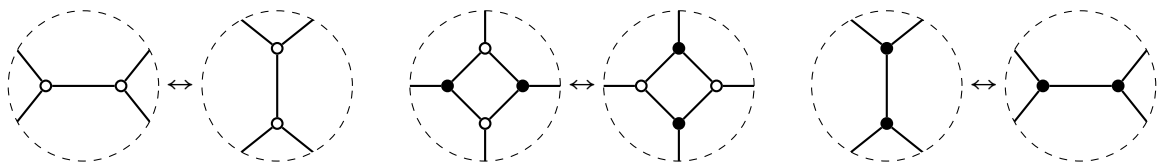

Since any two configurations of the same number of dots can be connected by such a deformation, we deduce the following.





\begin{rethm}{C}\label{thm: move equiv}
Let $0<k<n>3$. The $k$th order Voronoi decompositions of any two sets of $n$-many dots in general position in the sphere may be related by a sequence of the local moves in Figure \ref{fig: postnikovmoves}.
\end{rethm}

These same moves appeared in Postnikov's seminal work on planar bicolored graphs \cite{Pos}. 
Graphs related by these moves have many features in common, for example, they determine isomorphic cluster algebras. 
%
The theorem implies that the $k$th order Voronoi decompositions of any two configurations of $n$-many dots in the sphere determine isomorphic cluster algebras. Cluster algebras constructed this way include several important examples, such as the \emph{Markov cluster algebra} and the \emph{$X_7$ cluster algebra}; preliminary results are discussed briefly in Section \ref{sec: future}.

\begin{rem}
Versions of these counting problems and the resulting formulas have been considered in many previous works, such as the $k=\ell$ version of Theorem \ref{thm: incident} appearing in \cite{Ard04}. Most notably, Theorems \ref{thm: incident} and \ref{thm: avoidant} were proven in full generality in \cite{CHP22}. In that paper, the counts are deduced by constructing spherical Voronoi decompositions from gluing together pairs of planar Voronoi decompositions and then using a symmetry in planar Voronoi decompositions observed in \cite{Lin03} (see Appendix \ref{appendix}).
Our argument suggests that the more natural derivation may be the other direction, by first counting the numbers of cells in spherical higher order Voronoi decompositions and then using the gluing map in \cite{CHP22} to deduce the symmetry in \cite{Lin03}.
\end{rem}



\subsection*{Acknowledgements}

The authors are most grateful for the organizers of the Early Career Conference in Combinatorics held at the University of Illinois Urbana-Champaign in June 2024, where a version of this problem was presented in an open problem session, leading to this collaboration.

\section{Dots in the sphere or plane}

Throughout this note, we let $D$ denote a finite set of points, which we call \textbf{dots}, which may be in either a plane $P$ or a sphere $S$. We say the configuration of dots $D$ is in \textbf{general position} if
\begin{itemize}
    \item no four dots lie on a circle, and
    \item (in the case of the plane) no three dots lie on a line.
\end{itemize}

We can translate between configurations of dots in the plane and in the sphere via a \emph{stereographic projection} map.
To define such a map, choose an embedding of both the sphere $S$ and the plane $P$ in 3-dimensional space, and choose a {point} $p_\infty \in S$ whose tangent plane is parallel to $P$ but not equal to $P$.\footnote{For example, one may always choose $p_\infty$ to be the furthest point on $S$ from $P$.}
The corresponding \textbf{stereographic projection} of a point $p$ in $S\smallsetminus p_\infty$ is defined to be the unique point $\pi(p)$ on $P$ which lies on the line connecting $p$ and $p_\infty$.




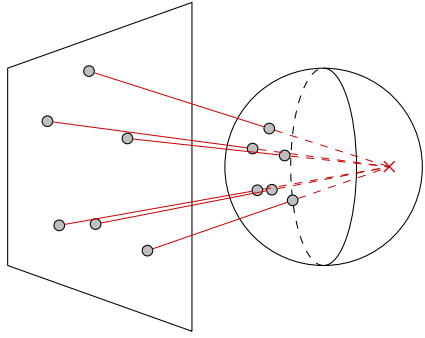
\begin{figure}[h!t]
    \begin{tikzpicture}[scale=1.75,rotate=90]
        \node[dot] (d1) at (-.445,.258) {};
        \node[dot] (d2) at (.347,.348) {};
        \node[dot] (d3) at (.728,.032) {};
        \node[dot] (d4) at (.216,-.26) {};
        \node[dot] (d5) at (-.435,-.018) {};
        \node[dot] (d6) at (-.637,-.412) {};

        \draw (-1.25,-.75) to (1.25,-.75) to (.75,.65) to (-.75,.65) to (-1.25,-.75);

        \begin{scope}[yshift=-1.75cm]
            \draw (0,0) circle (.75);
    
            \draw (-.75,0) arc [start angle=180, end angle=360, x radius=.75, y radius=.25];
            \draw[dashed] (-.75,0) arc [start angle=180, end angle=0, x radius=.75, y radius=.25];
    
            \coordinate (point) at (0,-.5);
            \node[dark red] at (point) {$\times$};
        \end{scope}

        \def\ta{.4};
        \def\tb{.4};
        \def\tc{.4};
        \def\td{.4};
        \def\te{.4};
        \def\tf{.4};

        \node[dot] (p1) at ($\ta*(d1)+{1-\ta}*(point)$) {};
        \node[dot] (p2) at ($\tb*(d2)+{1-\tb}*(point)$) {};
        \node[dot] (p3) at ($\tc*(d3)+{1-\tc}*(point)$) {};
        \node[dot] (p4) at ($\td*(d4)+{1-\td}*(point)$) {};
        \node[dot] (p5) at ($\te*(d5)+{1-\te}*(point)$) {};
        \node[dot] (p6) at ($\tf*(d6)+{1-\tf}*(point)$) {};

        \draw[projection line] (d1) to (p1);
        \draw[projection line] (d2) to (p2);
        \draw[projection line] (d3) to (p3);
        \draw[projection line] (d4) to (p4);
        \draw[projection line] (d5) to (p5);
        \draw[projection line] (d6) to (p6);
        \draw[projection line,dashed] (p1) to (point);
        \draw[projection line,dashed] (p2) to (point);
        \draw[projection line,dashed] (p3) to (point);
        \draw[projection line,dashed] (p4) to (point);
        \draw[projection line,dashed] (p5) to (point);
        \draw[projection line,dashed] (p6) to (point);

    \end{tikzpicture}
\caption{Stereographic projection of 6 dots}
\label{fig: stereo}
\end{figure}

Stereographic projection defines a homeomorphism $\pi$ between the punctured sphere $S\smallsetminus p_\infty$ and the plane $P$ which has many nice properties; e.g.~stereographic projection induces bijections
\[
\begin{aligned}
\{\text{circles in $S$ which contain $p_\infty$}\} &\xrightarrow{\sim} \{\text{lines in $P$}\}
\\
\{\text{circles in $S$ which do not contain $p_\infty$}\} &\xrightarrow{\sim} \{\text{circles in $P$}\}
\end{aligned}
\]
As an immediate consequence, configurations of dots in general position in one space can be translated to configurations of dots in general position in the other space.
\begin{prop}
A configuration of dots $D$ in the sphere $S$ is in general position if and only if there is a stereographic projection $\pi:S\smallsetminus p_\infty\rightarrow P$ such that $\pi(D)$ is in general position in the plane $P$.
\end{prop}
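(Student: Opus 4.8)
The plan is to prove both directions by showing that the defining conditions of general position are preserved by stereographic projection, using the two bijections the excerpt has already recorded: circles in $S$ not through $p_\infty$ correspond to circles in $P$, and circles in $S$ through $p_\infty$ correspond to lines in $P$. The key point is that the conditions ``four dots concyclic'' and ``three dots collinear'' can both be phrased in terms of circles in $S$, and $p_\infty$ gives us a choice of how a spherical circle through four dots gets split into the planar conditions.

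First I would fix the setup: given $D$ in general position in $S$, I want to choose $p_\infty$ so that $\pi(D)$ is in general position in $P$. Since $D$ is finite, only finitely many circles in $S$ pass through two or more dots; choose $p_\infty \in S$ avoiding all of these (and avoiding $D$ itself). Now suppose four points of $\pi(D)$ were concyclic or collinear in $P$. If concyclic, the corresponding circle in $P$ pulls back under $\pi^{-1}$ to a circle in $S$ not through $p_\infty$ containing four dots of $D$, contradicting general position of $D$. If collinear, the line in $P$ pulls back to a circle in $S$ \emph{through} $p_\infty$ containing four dots of $D$, and that circle then contains two or more dots — but we chose $p_\infty$ off every circle through two or more dots, contradiction. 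Hence $\pi(D)$ is in general position, and since $\pi$ is a bijection on dots we also need the mild check that $\pi(D)$ has the same cardinality, which is immediate.

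For the converse, suppose some stereographic projection $\pi: S \smallsetminus p_\infty \to P$ has $\pi(D)$ in general position in $P$; I must show $D$ is in general position in $S$, i.e. no four dots of $D$ lie on a circle in $S$. Suppose four dots did lie on a circle $C \subset S$. If $p_\infty \notin C$, then $\pi(C)$ is a circle in $P$ containing the four corresponding points of $\pi(D)$, contradicting general position in $P$. If $p_\infty \in C$, then $\pi(C)$ is a line in $P$ containing those four points, again contradicting general position in $P$ (which forbids three collinear dots, hence certainly four). Either way we reach a contradiction, so $D$ is in general position in $S$. Note this direction needs no hypothesis on the particular $p_\infty$ beyond $p_\infty \notin D$ (which is automatic since $\pi$ is defined on all of $D$).

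The main obstacle — really the only subtlety — is the forward direction's choice of $p_\infty$: one must observe that ``$p_\infty$ lies on no circle through $\ge 2$ dots'' is a condition excluding only finitely many points of $S$ (finitely many such circles, each a measure-zero subset), so a valid $p_\infty$ exists, and that this one condition simultaneously handles both the ``no three collinear'' requirement (a degenerate circle through $p_\infty$) and does not interfere with the ``no four concyclic'' requirement (which follows purely from general position of $D$). I would also remark that one could alternatively phrase this via the classical fact that stereographic projection is the restriction of a Möbius transformation of $S^2 \cong \CC \cup \{\infty\}$, under which the family of ``circles and lines'' is permuted, but the elementary argument above is self-contained and matches the level of the paper.
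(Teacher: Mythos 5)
The paper gives no written proof here—it presents the proposition as an immediate consequence of the two circle/line bijections—and your overall strategy of translating both general-position conditions through those bijections is exactly the intended one; your converse direction is correct as written. However, the forward direction contains a step that fails: you assert that ``only finitely many circles in $S$ pass through two or more dots'' and propose to choose $p_\infty$ off all of them. This is false. Through any two distinct points of $S$ there is a one-parameter family of circles, and in fact the union of all circles through a fixed pair of dots is the \emph{entire} sphere: any third point $q\in S$ is not collinear in $\mathbb{R}^3$ with the two dots (a line meets a sphere in at most two points), so the plane through the three of them cuts $S$ in a circle containing all three. Hence, as soon as $|D|\geq 2$, the set of admissible $p_\infty$ under your criterion is empty.

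The repair is small. The correct finiteness statement is that only finitely many circles pass through \emph{three} or more dots, namely the $\binom{n}{3}$ incident circles (three distinct points of $S$ determine a unique circle, and general position prevents a fourth dot from joining one). Choose $p_\infty$ off this finite union of nowhere-dense sets. Then ``no three points of $\pi(D)$ are collinear'' follows because a line through three points of $\pi(D)$ would pull back to a circle through $p_\infty$ and three dots, i.e.\ $p_\infty$ would lie on an incident circle, contrary to the choice; and ``no four points of $\pi(D)$ are concyclic'' follows, exactly as you argue, purely from general position of $D$ in $S$, with no condition on $p_\infty$ needed (the case of four points on a line is subsumed by the three-collinear case). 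With that substitution your proof is complete and matches the argument the paper leaves implicit.
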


As we will see, the numbers of incident and avoidant circles we want to count are also preserved by stereographic projection.
As a consequence, we can work primarily in one surface (usually, the sphere $S$) but deduce results in both surfaces.


\begin{warn}
Even when $\pi$ sends a circle to a circle, it does not preserve the centers of those circles.
Similarly, it does not send geodesics in $S$ (i.e.~great circles) to geodesics in $P$ (i.e.~lines). This leads to a qualitative difference between the \emph{higher order Voronoi decompositions} (see Appendix \ref{appendix}).
\end{warn}

\section{Counting incident circles}

Let $D$ be a finite set of dots in general position in the plane $P$ or sphere $S$. 
Any three dots in $D$ lie on a unique \textbf{incident circle},\footnote{
The circle through three points in the plane is sometimes called the \emph{circumcircle}, particularly when the three points are the vertices of a triangle. We avoid this language, as we are not interested in the triangle spanned by the dots.}
and this circle separates the remaining dots into two sets. 
%
%
%
We are interested in counting incident circles 
with fixed numbers of dots on either side. We start with the following, which will form the base case of our induction.

\begin{prop}\label{prop: incident0}
Let $n\geq4$. Given $n$-many dots $D$ in general position in the sphere $S$, there are $(2n-4)$-many incident circles with no dots on one side.
\end{prop}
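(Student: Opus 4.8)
The plan is to reinterpret incident circles on the sphere in terms of the convex hull of the dots in three-dimensional space. Fix an embedding of $S$ as the unit sphere in $\RR^3$, so that $D$ becomes a set of $n$ points of $\RR^3$ lying on $S$. Every circle in $S$ is exactly the intersection of $S$ with an affine plane $H$, and the two open caps it bounds are $S\cap H^+$ and $S\cap H^-$, where $H^\pm$ denote the two open half-spaces cut out by $H$. In particular, an incident circle through dots $a,b,c$ has no dots on one side if and only if the plane $H$ through $a,b,c$ has all of $D\smallsetminus\{a,b,c\}$ inside a single open half-space.

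The key observation is then that incident circles with an empty cap are in bijection with the triangular faces of the convex polytope $\conv(D)$. In one direction, if $\{a,b,c\}$ spans a face of $\conv(D)$, there is a supporting plane $H$ meeting $D$ in exactly $\{a,b,c\}$ with all other dots strictly on one side; intersecting with $S$ shows the corresponding incident circle has an empty cap. Conversely, if an incident circle through $a,b,c$ has an empty cap, its plane $H$ has every remaining dot on one side, so $H$ is a supporting plane of $\conv(D)$, and general position (no fourth dot of $D$ can lie on $H$, as that would force four concyclic dots) forces $\{a,b,c\}$ to be a triangular face. Since distinct faces determine distinct planes and hence distinct circles, this correspondence is a genuine bijection.

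It remains to count the triangular faces. General position guarantees no four dots are concyclic, hence no four are coplanar, so $\conv(D)$ is simplicial; and since $n\geq 4$ dots cannot all lie on a single circle, $\conv(D)$ is $3$-dimensional. Writing $V,E,F$ for its numbers of vertices, edges, and faces, we have $V=n$; every face is a triangle, so $2E=3F$; and Euler's formula $V-E+F=2$ then yields $F=2V-4=2n-4$.

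The only real subtlety I anticipate is pinning down the bijection precisely — in particular, checking that an empty cap genuinely forces the circle's plane to be a supporting hyperplane of $\conv(D)$ meeting $D$ in exactly three dots, and that the correspondence neither over- nor under-counts (this is exactly where the general position hypothesis is used). Once that is in place, the face count is the standard Euler-characteristic computation above, and the proposition follows.
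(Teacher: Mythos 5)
Your proof is correct and follows essentially the same route as the paper's: identify incident circles with an empty side with the triangular faces of the convex hull of $D$ in $\RR^3$, then count faces via Euler's formula and the relation $2E=3F$. The only difference is cosmetic (you spell out the supporting-plane direction of the bijection in slightly more detail), so no changes are needed.
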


\begin{proof}
Choose an embedding of the sphere $S$ in $\mathbb{R}^3$, and let $H$ denote the convex hull of $D$ as a subset of $\mathbb{R}^3$. 
Since the dots are in general position, at most 3 dots can lie on any plane in $\mathbb{R}^3$, and so each face of the convex hull $H$ must be a triangle. Since each edge of $H$ lies in two faces and each triangle has three edges,
\[ 
2(\text{$\#$ of edges in $H$})
=
3(\text{$\#$ of faces in $H$})
\]
The vertices of $H$ are the $n$-many dots $D$, so the Euler characteristic of $H$ tells us that
\[
2 = n - (\text{$\#$ of edges in $H$}) + (\text{$\#$ of faces in $H$}) 
\]
Solving the previous two equations tells us that the number of faces of $H$ must be $2n-4$.

Given three dots $\{a,b,c\}\subset D$, the incident circle through those dots is the intersection of the affine plane through $\{a,b,c\}$ in $\mathbb{R}^3$ with the sphere $S$. Therefore, the incident circle through $\{a,b,c\}$ has no dots on one side if and only if the plane through $\{a,b,c\}$ bounds a half-space in $\mathbb{R}^3$ which contains the convex hull $H$; equivalently, $\{a,b,c\}$ lie on a face of the convex hull $H$. Since each face of $H$ is a triangle, the face uniquely determines the three dots $\{a,b,c\}$. Therefore, we have a bijection between incident circles with no dots on one side and faces in the convex hull $H$.
\end{proof}



For the next proposition, we want to distinguish between the two sides of an incident circle. We can do this by choosing an orientation, so  we can distinguish the \textbf{left side} and the \textbf{right side}.

\begin{prop}\label{prop: orientedincident}
Let $n\geq3$ and $0\leq k \leq n-3$. Given $n$-many dots $D$ in general position in the sphere $S$, there are 
\[I_{k,n} := 2(k+1)(n-k-2)\]
oriented incident circles with $k$-many dots on their left side (and thus $n-k-3$ dots on the right).
\end{prop}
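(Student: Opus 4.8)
The plan is to prove Proposition~\ref{prop: orientedincident} by induction on $n$, using Proposition~\ref{prop: incident0} as the base of a companion induction. The quantity to control is $I_{k,n}$, the number of oriented incident circles with exactly $k$ dots on the left; note that summing the two orientations of an unoriented circle recovers the formula in Theorem~\ref{thm: incident}, and that $\sum_{k} I_{k,n}$ counts all oriented incident circles, i.e.\ twice $\binom{n}{3}$. The strategy is a standard \emph{deletion} argument: fix a dot $d \in D$ and compare incident circles of $D$ with incident circles of $D \smallsetminus d$. Each incident circle of $D$ either (i) does not pass through $d$, in which case it is an incident circle of $D\smallsetminus d$ with $d$ lying on one of its two sides, or (ii) passes through $d$, in which case it is determined by $d$ together with two other dots. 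I would set up a bookkeeping identity relating $I_{k,n}$, $I_{k,n-1}$, and the number of incident circles through $d$ with a prescribed split of the remaining dots, then average over the choice of $d \in D$.

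The key mechanism is to understand how a circle through $d$ and two other dots $\{a,b\}$ relates, after deleting $d$, to the pencil of circles through $a$ and $b$. First I would record that for a fixed pair $\{a,b\}$, as one sweeps the pencil of circles through $a$ and $b$, the remaining $n-2$ dots cross from one side to the other one at a time, and each such crossing event is exactly an incident circle through $a$, $b$, and a third dot. This gives, for each ordered pair, a sequence of $n-2$ ``transition'' circles, and sorting these by how many dots lie on a chosen side yields a telescoping relation. Concretely, I expect to obtain a recursion of the shape
\[
n\, I_{k,n} = (\text{contribution from circles avoiding }d) + (\text{contribution from circles through }d),
\]
where the first term, after summing over $d$, rebuilds $I_{k-1,n-1}+I_{k,n-1}$ weighted appropriately (a dot not on the circle lands left or right), and the second term counts, over all $d$, the oriented incident circles through a fixed dot with $k$ dots on the left — which itself can be re-expressed via the pencil description. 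Solving this recursion with the base case $I_{0,n} = 2n-4$ from Proposition~\ref{prop: incident0} (and the symmetry $I_{k,n} = I_{n-3-k,n}$ from reversing orientation) should pin down $I_{k,n} = 2(k+1)(n-k-2)$.

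An alternative, possibly cleaner, route is a direct ``continuous motion in the pencil'' argument that bypasses induction on $n$: fix an oriented pair $(a,b)$ and push a circle through the pencil from one degenerate end (the line/circle infinitesimally close to containing all other dots on the right) to the other, counting that we pass through exactly one incident circle for each of the $n-2$ other dots, with the left-count incrementing by one each time. Aggregating over all $n(n-1)$ ordered pairs $(a,b)$, each oriented incident circle through three dots $\{a,b,c\}$ is counted once for each of the three ordered pairs among $\{a,b,c\}$ compatible with its orientation, i.e.\ with a fixed multiplicity, and the left-split statistic can be matched up. This would express $\sum_k x^k I_{k,n}$ directly and reduce the proposition to a generating-function identity.

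The main obstacle I anticipate is the careful bookkeeping at the ``ends'' of each pencil — i.e.\ correctly accounting for the degenerate circles (great circles / lines through $p_\infty$ under stereographic projection) and making sure the count of transition circles in a pencil, together with the left/right labeling, is consistent across all pencils without double-counting or sign errors. A secondary subtlety is handling the boundary cases $k=0$ and $k=n-3$ so that the recursion closes using only Proposition~\ref{prop: incident0} and the orientation-reversal symmetry. Once the recursion is correctly stated, solving it is routine; so the real work is in justifying the pencil-crossing count rigorously (genericity ensures each transition is simple and the crossings are distinct) and in the averaging step that symmetrizes over the deleted dot.
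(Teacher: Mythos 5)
Your overall instinct---a deletion/double-counting argument anchored by Proposition \ref{prop: incident0}---is the right one, and it is the approach the paper takes; but the recursion you display does not close. If you partition the oriented incident circles of $D$ with $k$ dots on the left according to their relation to a deleted dot $d$ and then sum over $d\in D$, every term on the right-hand side is itself a multiple of $I_{k,n}$: circles through $d$ contribute $3I_{k,n}$ in total, circles with $d$ on the left contribute $kI_{k,n}$, and circles with $d$ on the right contribute $(n-k-3)I_{k,n}$, so the identity collapses to $nI_{k,n}=nI_{k,n}$. This is exactly why you find yourself needing a separate count of incident circles through a fixed dot: that quantity is an extra unknown your bookkeeping cannot eliminate, and the pencil sweep you propose to compute it is only sketched --- on the sphere the pencil of circles through two dots is a closed loop with no degenerate ``ends,'' so the telescoping from one end to the other needs to be reformulated before it yields anything.

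The fix is to run the double count with the smaller configuration on one side and a \emph{fixed left-count relative to $D\smallsetminus\{d\}$}, so that the two sides of the deleted dot feed into \emph{different} values of $k$ for $D$. Count pairs $(d, C')$ where $C'$ is an oriented incident circle of $D\smallsetminus\{d\}$ with $k-1$ dots on its left: choosing $d$ first and then $C'$ gives $n\,I_{k-1,n-1}$ pairs, while choosing the circle first as an incident circle of $D$ (it has $k-1$ dots on its left if $d$ was on its right, and $k$ dots on its left if $d$ was on its left) gives $(n-k-2)\,I_{k-1,n}+k\,I_{k,n}$ pairs. The resulting identity
\[
n\,I_{k-1,n-1}=(n-k-2)\,I_{k-1,n}+k\,I_{k,n}
\]
contains no ``circles through $d$'' term at all, and together with the base case $I_{0,n}=2n-4$ from Proposition \ref{prop: incident0} it determines $I_{k,n}=2(k+1)(n-k-2)$ by induction on $k$ for each fixed $n$, with no pencil argument and no appeal to the orientation-reversal symmetry.
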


\begin{proof}
If $n=3$, then $k=0$. Since the unique incident circle through the three dots has $2=I_{0,3}$ orientations, the theorem holds in this case. 

We prove the $n\geq4$ case by induction on $k$. 
If $k=0$, then Proposition \ref{prop: incident0} implies there are $(2n-4)$-many incident circles with no dots on one side. Since $n\geq4$, the other side must have at least 1 dot and there is a unique orientation of each such circle which has no dots on the left side. Thus, there are $I_{0,n}=2\cdot 1\cdot(n-2)$-many oriented incident circles with $0$ dots on their left side.

Next, assume the proposition holds for $k-1$, and consider a configuration $D$ of $n$-many dots in general position in $S$. 
There are $n$-many ways to choose a dot $d\in D$ to delete; by assumption, there are then $I_{k-1,n-1}$-many ways to choose an oriented incident circle through three of the remaining $(n-1)$-many dots which has $(k-1)$-many dots on its left side. 
Therefore, there are
\[
n \cdot I_{k-1,n-1} = 2nk(n-k-2)
\]
ways to delete a dot and then choose an oriented incident circle with $k-1$ dots on its left side.

Alternatively, we could first choose the circle, then choose the dot to delete. This can be done in two disjoint ways.
    \begin{itemize}
        \item First choose an oriented incident circle with $(k-1)$-many dots on its left side, then choose a dot on that circle's right side to delete. By assumption, the number of ways to do this is
        \[
        I_{k-1,n}\cdot (n-k-2) = 2k(n-k-1)(n-k-2)
        \]
        \item First choose an oriented incident circle with $k$-many dots on its left side, then choose a dot on that circle's left side to delete. The number of ways to do this is
        \[
        (\text{$\#$ of oriented incident circles of $D$ with $k$-many dots on their left side})\cdot k
        \]
    \end{itemize}
Since this constructs the same set as before, the sum of these two counts is equal to $nI_{k-1,n-1}$. Solving this equality for the desired number of circles,
\[
(\text{$\#$ of oriented incident circles of $D$ with $k$-many dots on their left side})
=2(k+1)(n-k-2)
\]
This completes the induction.
\end{proof}

\begin{rem}
The heart of the proof is a double counting argument which can be summarized as
\[ nI_{k-1,n-1} = (n-k-2)I_{k-1,n} + kI_{k,n} 
\qedhere
\]
\end{rem}


It is now a simple matter to adapt this to count unoriented incident circles.

\begin{rethm}{\ref{thm: incident}}
Given at least three dots in general position in the plane or sphere, the number of circles through three of the dots which separate the remaining dots into subsets of size $k$ and $\ell$ is
    \[
    \left\{
    \begin{array}{cc}
    2(k+1)(\ell+1) &\text{if $k\neq \ell$} \\
    (k+1)^2 &\text{if $k= \ell$} \\
    \end{array}
    \right\}
    \]
\end{rethm}

\begin{proof}
Since exactly 3 dots lie on an incident circle, there must be $n:=k+\ell+3$ dots. 

If $k\neq \ell$, then such an incident circle has a unique orientation with $k$-many dots on its left side. By Proposition \ref{prop: orientedincident}, the number of such oriented incident circles in the sphere is 
\[
2(k+1)(n-k-2) = 2(k+1)(\ell+1)
\]

If $k=\ell$, then then such an incident circle has two orientations with $k$-many dots on the left side. By Proposition \ref{prop: orientedincident}, the number of such oriented incident circles in the sphere is 
\[
2(k+1)(n-k-2) = 2(k+1)(\ell+1)= 2(k+1)^2
\]
Dividing by $2$, the number of unoriented incident circles with $k$-many dots on one side is $(k+1)^2$.

This proves the theorem for dots in the sphere $S$.
Since incident circles and the sets of dots on either side are preserved by stereographic projection, the theorem holds for dots in the plane $P$.
\end{proof}

\begin{ex}\label{ex: interiorcircles}
Given a configuration of dots in the plane, it may seem more natural to count incident circles with a fixed number of dots on the inside (rather than on either of the sides). However, the number of such circles can depend on the specific configuration.

To see this, consider the two configurations of 4 dots in general position in the plane given in Figure \ref{fig: fourdots}. Each configuration admits $\binom{4}{3}=4$ incident circles. In Figure \ref{fig: fourdotsonecircle}, only one of those circles has a dot on the inside, while, in Figure \ref{fig: fourdotstwocircles}, two of those circles have a dot on the inside.

\begin{figure}[h!tb]
\captionsetup[subfigure]{justification=centering}
\centering
\begin{subfigure}[c]{0.45\linewidth}
\centering
\begin{tikzpicture}[inner sep=0.5mm,scale=1.25,auto]
	\draw[dark red] (0,0) circle (1);
	\node[dot] (0) at (0,0) [circle,draw] {};
	\node[dot] (1) at (90:1) [circle,draw] {};
	\node[dot] (2) at (-30:1) [circle,draw] {};
	\node[dot] (3) at (210:1) [circle,draw] {};
\end{tikzpicture}
\subcaption{A configuration of four dots admitting a unique incident circle with one interior dot}
\label{fig: fourdotsonecircle}
\end{subfigure}
\begin{subfigure}[c]{0.45\linewidth}
\centering
\begin{tikzpicture}[inner sep=0.5mm,scale=1.25,auto]
	\draw[dark red] (.5,0) circle (1);
	\draw[dark red] (-.5,0) circle (1);
	\node[dot] (0) at (.5,0) [circle,draw] {};
	\node[dot] (1) at (-.5,0) [circle,draw] {};
	\node[dot] (2) at (0,.866) [circle,draw] {};
	\node[dot] (3) at (0,-.866) [circle,draw] {};
\end{tikzpicture}
\subcaption{A configuration of four dots admitting two incident circles with one interior dot}
\label{fig: fourdotstwocircles}
\end{subfigure}
\caption{Two configurations of four dots in general position in the plane}
\label{fig: fourdots}
\end{figure}
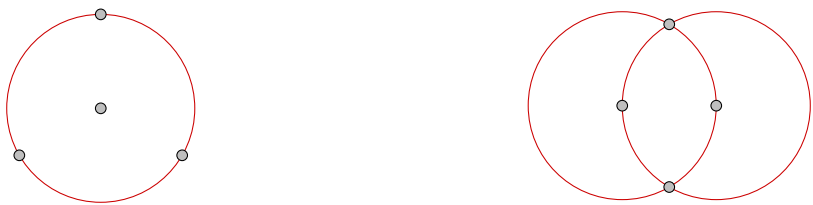

Note that the three incident circles not pictured in 
Figure \ref{fig: fourdotsonecircle} and the two incident circles not pictured in Figure \ref{fig: fourdotstwocircles} each have 0 dots in the inside and 1 dot on the outside.
Therefore, in both configurations, all four incident circles separate the remaining dot(s) into subsets of sizes $k=1$ and $\ell=0$, matching the formula in Theorem \ref{thm: incident}.
\end{ex}






\section{Higher order Voronoi decompositions}\label{sec: higher Voronoi}

\def\dis{\operatorname{dist}}

Our primary tool for counting avoidant circles will be the \emph{$k$-th order Voronoi decomposition} of the sphere determined by a configuration of dots $D$. Roughly speaking, this decomposes the sphere into regions based on which $k$-element subset of the dots is the closest to a given point.\footnote{
Higher order Voronoi decompositions may also be defined in the plane (or any metric space), but they are not as nicely behaved as that of the sphere. In particular, the number of vertices, edges, and faces may depend on the configuration. See Appendix \ref{appendix} for a discussion of the planar case and the relation to the spherical case.
}  A survey of results relating to higher order Voronoi diagrams can be found in Edelsbrunner’s book on algorithms in combinatorial geometry \cite{EdelsbrunnerBook}, though our discussion here assumes no prior knowledge.








Given a configuration of dots $D$ in the sphere $S$ and an arbitrary point $p\in S$, 
index the dots $d_1,d_2,\dotsc,d_n$ in $D$ by weakly increasing distance from $p$; that is,
\[
\dis(p,d_1)\leq \dis(p,d_2) \leq \dotsb \leq \dis(p,d_n) 
\]
where $\dis(p,q)$ denotes the geodesic distance between any two points $p,q\in S$.

Fix an integer $0<k<n$. For each point $p\in S$, we use the temporary indexing above to define a pair of sets (which ultimately do not depend on the choice of indexing)
\[
\begin{aligned}
D_{-}(p)&:= \{d\in D \mid \dis(p,d) \leq \dis(p,d_k) \} \\
D_{+}(p)&:= \{d\in D \mid \dis(p,d) \geq \dis(p,d_{k+1}) \} \\
\end{aligned}
\]
Informally, $D_{-}(p)$ is the set of dots that are at worst tied for $k$-closest to $p$, and $D_{+}(p)$ is the set of dots that are at best tied for $(k+1)$-closest to $p$.
While these two sets are often disjoint, they will overlap whenever there is a tie between the $k$th closest and the $(k+1)$-closest dots to $p$.

\begin{defn}
Let $0<k<n>3$ and let $D$ be a set of $n$-many dots in the sphere $S$. A \textbf{(closed) $k$th-order Voronoi stratum} is a subset $V\subset S$ of the form 
\[
\{ q\in S \text{ such that $D_-(p)\subseteq D_-(q)$ and $D_+(p)\subseteq D_+(q)$}\}
\]
for some point $p\in S$. The $k$th order Voronoi strata collectively form the \textbf{$k$th order Voronoi decomposition} of $S$ determined by $D$.
\end{defn}


While many different points $p\in S$ may determine the same $k$th order Voronoi stratum, each $k$th order Voronoi stratum $V$ is uniquely determined by the pair of subsets (for any $p$ which defines $V$)
\begin{equation}\label{eq: Voronoisets}
D_-(V) := D_-(p)
\text{ and }
D_+(V) := D_+(p).
\end{equation}

The possible shapes of these strata are given as follows.

\begin{prop}\label{prop: stratumtypes}
Let $0<k<n>3$ and let $D$ be a configuration of $n$-many dots in general position in $S$. Then every $k$th order Voronoi stratum $V$ must be one of the following.
\begin{enumerate}
    \item A spherical polygon.
    \item A great circular arc.
    \item A single point.
\end{enumerate}
The cases are characterized by $D_-(V)\cap D_+(V)$ consisting of $0$, $2$, or $3$ dots, respectively.
\end{prop}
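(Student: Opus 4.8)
The plan is to analyze the set $D_-(V) \cap D_+(V)$ — the dots that are "tied" between the $k$-closest and $(k+1)$-closest positions — and show this tie-set has size $0$, $2$, or $3$, then identify each case with the claimed geometric shape. The key observation is that for a point $q$ with $D_-(q) \cap D_+(q) = T$, a dot $d \in T$ is exactly one of the dots lying on the "boundary circle" determined by $q$: if we write $m := |D_-(q) \setminus T|$ (the dots strictly closer than $k$-th place), then the $|T|$ dots of $T$ are precisely those tied at distance equal to the $(m+1), (m+2), \dots, k$-th smallest distances, i.e.\ they are equidistant from $q$, hence all lie on a common circle $C_q$ centered at $q$, and this circle has exactly $m$ dots strictly inside it. Conversely, moving $q$ within $S$, the conditions $D_-(p) \subseteq D_-(q)$ and $D_+(p) \subseteq D_+(q)$ defining $V$ translate into a system of (weak) inequalities $\dis(q,d) \le \dis(q,d')$ for $d \in D_-(p)$, $d' \in D_+(p)$.

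First I would record that each such inequality $\dis(q,d) \le \dis(q,d')$ carves out a closed hemisphere-like region: on the sphere, the locus $\dis(q,d) = \dis(q,d')$ is the great circle that perpendicularly bisects the geodesic segment from $d$ to $d'$ (the set of points equidistant from $d$ and $d'$), and the inequality picks out the closed half-sphere on the $d$-side. Hence $V$ is an intersection of finitely many closed half-spheres, so $V$ is a \emph{spherically convex} region: a spherical polygon, possibly degenerating to an arc of a great circle or a single point (or conceivably empty, but $V$ is nonempty by construction since $p \in V$). This already gives that $V$ has one of the three listed shapes; the remaining work is the characterization by $|D_-(V) \cap D_+(V)|$.

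Next I would pin down the dimension of $V$ by counting how many of the defining great-circle walls $V$ actually lies on. A wall $\dis(q,d)=\dis(q,d')$ is "active" (contains all of $V$) precisely when both $d$ and $d'$ lie in $D_-(V)\cap D_+(V)=T$: if $d\in D_-(V)$ but $d\notin D_+(V)$, then $d$ is strictly among the $k$ closest throughout $V$, so the inequality is strict on the interior; the wall is forced to equality exactly for pairs within $T$. Since all dots of $T$ are equidistant from any $q\in V$ — they all lie on the circle $C_q$ centered at $q$ — $V$ is contained in the common solution set of the equations "$d$ and $d'$ equidistant" for $d,d' \in T$. If $|T|=0$ or $|T|=1$ there are no such equations, $V$ is full-dimensional, a spherical polygon; wait — I must rule out $|T|=1$. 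But $|T|=1$ is impossible: a single dot cannot be simultaneously "at worst tied for $k$-closest" and "at best tied for $(k+1)$-closest" unless it is tied with \emph{another} dot, since ties require at least two participants; more precisely, $d \in D_-(q) \cap D_+(q)$ forces $\dis(q,d) = \dis(q,d_k) = \dis(q,d_{k+1})$, so some other dot shares that distance. Hence $|T| \ge 2$ whenever $T \neq \emptyset$. If $|T| = 2$, say $T = \{d,d'\}$, then $V$ lies on the single great circle "equidistant from $d,d'$" and on no further wall, so $V$ is a great circular arc (1-dimensional). If $|T| = 3$, say $T = \{d,d',d''\}$, then $V$ lies on the intersection of two distinct great circles (equidistant $d,d'$ and equidistant $d',d''$), which on the sphere meet in two antipodal points; general position forces $V$ to be a single such point. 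Finally I would check $|T| \le 3$: if four dots were tied at distance $\dis(q,d_k)$ from some $q \in S$, those four dots would lie on a common circle $C_q$, contradicting general position. This also confirms $|T| \neq 3$-with-a-degeneracy cases cannot collapse unexpectedly, because the two relevant perpendicular bisecting great circles are distinct (else three dots would be collinear on a great circle configuration forcing a $4$-on-a-circle or $3$-on-a-line type degeneracy — here, $3$ dots equidistant from \emph{two} points means those dots lie on a circle, fine, but the bisectors coinciding would force a further coincidence ruled out by general position).

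The main obstacle I anticipate is the careful bookkeeping at the two degenerate levels: verifying that $|T|=3$ genuinely yields a point and not, say, a longer arc or empty set, and conversely that a $0$-dimensional stratum must have $|T|=3$ rather than $|T|=2$ with an accidental coincidence of endpoints. This requires using general position precisely — no four dots concyclic, no three collinear (in the planar model) — to guarantee the two perpendicular-bisector great circles through a $3$-dot tie-set are transverse, and to rule out a $2$-dot arc shrinking to a point without a third dot entering the tie. I would handle this by a local perturbation / dimension-count argument: the equidistance conditions are independent under general position, so the expected codimension ($0$, $1$, $2$ for $|T| = 0$-or-$2$... ) — more carefully, codimension $0$ for $|T|\le 2$... let me restate: codimension jumps from $0$ to $1$ to $2$ as $|T|$ goes from $\le 1$ (vacuous, but forced to $0$) to $2$ to $3$, with each added dot in $T$ imposing exactly one new independent equation, and general position guaranteeing independence. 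That independence claim is the technical crux, and I would isolate it as the one place the general position hypothesis does real work.
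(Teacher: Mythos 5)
Your proposal is correct and follows essentially the same route as the paper: write $V$ as a finite intersection of closed hemispheres cut out by perpendicular-bisector great circles, conclude convexity, and read off the dimension from the tie-set $T = D_-(V)\cap D_+(V)$, using general position to bound $|T|\le 3$. Your explicit exclusion of $|T|=1$ and $|T|\ge 4$ is a small bonus the paper leaves implicit, but the argument is otherwise the same.
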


\noindent We refer to these three types of strata as \textbf{regions}, \textbf{edges}, and \textbf{vertices} in the $k$th order Voronoi decomposition, respectively.

\begin{proof}
Letting $D_-(V)$ and $D_+(V)$ be as in \eqref{eq: Voronoisets}, the definition of $V$ can be reformulated as follows.
\[
\begin{aligned}
V 
&= \{ q\in S \text{ such that $D_-(V)\subseteq D_-(q)$ and $D_+(V)\subseteq D_+(q)$}\}
\\
&= \{ q\in S  \text{ such that, for all $d_-\in D_-(V), d_+\in D_+(V)$, $\dis(q,d_-)\leq \dis(q,d_+)$}\}
\\
&= 
\bigcap_{d_-\in D_-(V)}~ \bigcap_{d_+\in D_+(V)}
\{ q\in S  \text{ such that $\dis(q,d_-)\leq \dis(q,d_+)$}\}
\\
\end{aligned}
\]
Each set in the last expression is a closed hemisphere (except when $d_-=d_+$, in which case it is all of $S$), and so $V$ is a finite intersection of closed hemispheres; in particular, it is convex.

If $D_-(V) \cap D_+(V)$ is empty and $p$ is a point which defines $V$, then 
\[ \dis(p,d_k) < \dis(p,d_{k+1}) \]
By continuity, $D_-$ and $D_+$ are constant on an open neighborhood of $p$, and so $V$ contains an open neighborhood of $p$. Since $V$ is a finite intersection of closed hemispheres, $V$ is a spherical polygon.

If $D_-(V) \cap D_+(V) = \{d_i,d_j\}$, then 
    \[
    \dis(p,d_i)=\dis(p,d_j)
    \]
for every point $p\in V$, and so $V$ is contained in the perpendicular bisector between $d_i$ and $d_j$. Since $V$ is convex, $V$ must be a single arc along this perpendicular bisector (which is a great circle).

If $D_-(V)\cap D_+(V)$ contains three distinct dots $d_i,d_j,d_k$, then for every $p \in V$,
    \[
    \dis(p,d_i)=\dis(p,d_j) = \dis(p,d_k) 
    \]
Since $V$ is convex, $V$ must be one of the two centers of the incident circle through $d_i,d_j,d_k$.
\end{proof}

The last step of the proof shows that vertices of the $k$th order Voronoi decomposition occur at centers of incident circles, which we deepen as follows.
As with sides, we can distinguish the two centers of a circle by choosing an orientation of the circle and considering the \textbf{left center}. 


\begin{prop}\label{prop: blackwhite3}
Assume a configuration of dots $D$ is in general position in the sphere.
\begin{enumerate}
    \item The vertices in the $k$th order Voronoi decomposition are the left centers of oriented incident circles with either $(k-2)$ or $(k-1)$-many dots on their left side.
    \item Every vertex in the $k$th order Voronoi decomposition is contained in exactly three edges.
\end{enumerate}

\end{prop}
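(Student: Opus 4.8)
The plan is to analyze a vertex $V$ of the $k$th order Voronoi decomposition using the last paragraph of the proof of Proposition \ref{prop: stratumtypes}, which already tells us that $V$ is one of the two centers of the incident circle $C$ through the three dots $\{d_i,d_j,d_\ell\} = D_-(V)\cap D_+(V)$. For part (1), I would fix an orientation of $C$ so that $V$ is the left center. Since $V$ lies on the perpendicular bisector of every pair of dots on $C$, the three dots of $C$ are all equidistant from $V$, and every other dot of $D$ is strictly nearer to or strictly farther from $V$ than these three (using general position, no fourth dot lies on $C$, hence none is equidistant). So if there are $m$ dots strictly closer to $V$ than $C$, the sorted-distance list around $V$ puts those $m$ dots first, then the three dots of $C$ in positions $m+1, m+2, m+3$, then the rest. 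For $V$ to be a genuine $k$th-order stratum we need the ``$k$th/($k{+}1$)st'' cut to fall among these three tied dots, i.e. $m+1 \le k \le m+2$, equivalently $m \in \{k-2, k-1\}$. Now I claim $m$ equals the number of dots on the left side of $C$: a dot $d$ is strictly closer to $V$ than the dots of $C$ precisely when the open hemisphere centered at $V$ containing $d$ is the one cut off by the plane of $C$ on the side of $V$ — and by definition of ``left center,'' that is the left side of $C$. (This is the one small geometric lemma to nail down carefully: the center $V$ of $C$, viewed in $\RR^3$, lies on the perpendicular from the sphere's center to the plane of $C$, and a point of $S$ is closer to $V$ iff it lies in the cap on $V$'s side; the left center is by convention the one on the left side.) Combining, $V$ is the left center of an oriented incident circle with $m = k-2$ or $m = k-1$ dots on its left, proving (1). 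Conversely, reversing this argument shows every such left center is a vertex.

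For part (2), I would count the edges incident to the vertex $V$ directly in terms of the three dots $\{d_i, d_j, d_\ell\}$ of $C$. With $m \in \{k-2,k-1\}$ dots strictly inside the relevant cap, near $V$ the only ambiguity in $D_-/D_+$ comes from ties among $d_i, d_j, d_\ell$; an edge through $V$ is a stratum where exactly two of these three remain tied for the $k/(k{+}1)$ boundary while the third has been broken to one side. There are $\binom{3}{2} = 3$ ways to choose which pair stays tied, and each such choice yields a genuine edge: the corresponding stratum is an arc of the perpendicular bisector of that pair, and one checks it is nondegenerate (has positive length) because near $V$ one can move along that bisector while keeping the third dot strictly on the correct side and all $m$ inner / remaining outer dots strictly ordered. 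Hence exactly three edges meet at $V$.

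The main obstacle I expect is part (2): showing that each of the three candidate pairs really does give a nondegenerate edge incident to $V$, rather than the arc degenerating or coinciding with another. Concretely, one must verify that moving off $V$ along the perpendicular bisector of, say, $\{d_i, d_j\}$ in the appropriate direction keeps $d_\ell$ strictly on its side and does not create a new tie with one of the $m$ closer dots or one of the farther dots — this is where general position and a continuity/openness argument (as used for regions in Proposition \ref{prop: stratumtypes}) are needed. One also wants to know these three arcs are pairwise distinct near $V$, which is immediate since distinct pairs have distinct perpendicular bisectors (three dots in general position on the sphere cannot have two coincident bisectors). Once the nondegeneracy is settled, the rest is bookkeeping with the sorted-distance list.
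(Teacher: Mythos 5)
Your proposal is correct and follows essentially the same route as the paper: identify the vertex as a center of the incident circle through the three tied dots, orient so it is the left center, observe that the $k/(k{+}1)$ cut must fall among the three tied dots to force $m\in\{k-2,k-1\}$ dots strictly closer (equivalently on the left side), and obtain the three edges as arcs of the three perpendicular bisectors of pairs of tied dots, each in the direction determined by whether the third dot becomes closer or farther. The nondegeneracy point you flag for part (2) is handled in the paper only by specifying the direction of each edge along its bisector, so your proposal is, if anything, slightly more explicit about what remains to check.
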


\begin{proof}
Let $p$ be a vertex in the $k$th order Voronoi decomposition.
Since every dot in $D_-(p)\cap D_+(p)$ lies on the same circle centered at $p$,
general position implies that $|D_-(p)\cap D_+(p)|=3$. Since $D_-(p)$ contains at least $k$-many dots and $D_+(p)$ contains at least $(n-k)$-many dots, then 
$D_-(p) \smallsetminus D_+(p)$ has either $(k-1)$ or $(k-2)$ many dots.
Orient the incident circle through $D_-(p)\cap D_+(p)$ so that $p$ is its left center. Then $D_-(p)\smallsetminus D_+(p)$ consists of dots on its left side; this set must contain either $(k-1)$ or $(k-2)$ many dots.

Let $d_i,d_j,d_k$ denote the dots in $D_-(p)\cap D_+(p)$. Then $p$ lies on the perpendicular bisector between $d_i$ and $d_j$. 
If $|D_-(p)\smallsetminus D_+(p)| = k-2$, then the $k$th order Voronoi decomposition has an edge along this great circle in the direction that makes $d_k$ closer than $d_i$ and $d_j$. If $|D_-(p)\smallsetminus D_+(p)| = k-1$, then the $k$th order Voronoi decomposition has an edge along this great circle in the direction that makes $d_k$ further than $d_i$ and $d_j$. Repeating for the other two subsets of $D_-(p)\cap D_+(p)$ gives the three edges containing $p$.
\end{proof}











Proposition \ref{prop: blackwhite3} lets us distinguish between two flavors of vertex, which we translate into colors. 
\begin{itemize}
    \item A vertex is \textbf{white} if it is the left center of an oriented incident circle with $(k-2)$-many dots on its left side.
    \item A vertex is \textbf{black} if it is the left center of an oriented incident circle with $(k-1)$-many dots on its left side.
\end{itemize}
We can restate this in terms of distance as follows. A point $p$ is a white vertex if there are dots tied for $(k-1)$st, $k$th, and $(k+1)$st closest to $p$; it is a black vertex if there are dots tied for $k$th, $(k+1)$st, and $(k+2)$nd closest to $p$.

With this coloring, the vertices and edges of the $k$th order Voronoi decomposition become a $3$-regular \textbf{bicolored graph} embedded in the sphere (see Figure \ref{fig: sphericalVoronoi}). 
This bicoloring is necessary for the moves considered in Figure \ref{fig: postnikovmoves} and in Section \ref{sec: Relations}.

\begin{figure}[h!!tb]
\includegraphics[height=6cm]{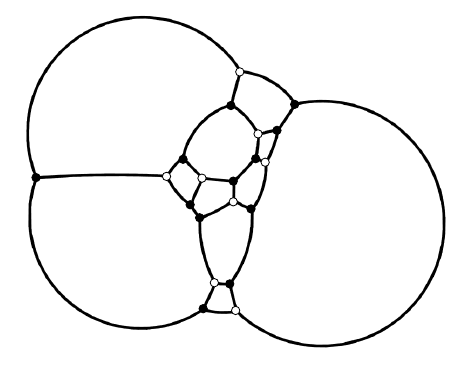}
\caption{The stereographic projection of a $2$nd order Voronoi decomposition of the sphere determined by a configuration of $6$ dots, with bicolored vertices}
\label{fig: sphericalVoronoi}
\end{figure}

Combined with the previous section, Proposition \ref{prop: blackwhite3} lets us count each type of stratum.

\begin{thm}\label{thm: stratacount}
Let $0<k<n > 3$. Given $n$-many dots in general position in the sphere, the $k$th order Voronoi decomposition has
\begin{itemize}
    \item 
    $2(2nk-2k^2-n)$-many vertices (of which $I_{k-2,n}$ are white and $I_{k-1,n}$ are black),
    \item $3(2nk-2k^2-n)$-many edges, and
    \item $(2nk-2k^2-n+2)$-many regions.
\end{itemize}


\end{thm}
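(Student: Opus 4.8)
The plan is to count the vertices first, then get the edges and regions essentially for free.

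\medskip

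\textbf{Counting vertices.} By Proposition \ref{prop: blackwhite3}(1), the vertices of the $k$th order Voronoi decomposition are precisely the left centers of oriented incident circles with $(k-2)$-many or $(k-1)$-many dots on their left side. Distinct such oriented incident circles have distinct left centers (two oriented circles with the same left center and the same dot counts on the left would have to be the same circle with the same orientation, since an oriented circle is determined by its center together with a point on it — actually more carefully, general position means no two incident circles are concentric unless equal, and the orientation is then pinned down by which side has the prescribed count), so the number of vertices is $I_{k-2,n} + I_{k-1,n}$, with $I_{k-2,n}$ white and $I_{k-1,n}$ black. Using $I_{k,n} = 2(k+1)(n-k-2)$ from Proposition \ref{prop: orientedincident} (valid for $0 \le k \le n-3$, and noting the edge cases $k=1$ and $k=n-1$ where one of the two families is empty because $I_{-1,n}$ or $I_{n-2,n}$ is vacuous/zero), one computes
\[
I_{k-2,n} + I_{k-1,n} = 2(k-1)(n-k) + 2k(n-k-1) = 2(2nk - 2k^2 - n),
\]
which is the claimed vertex count. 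I should double-check the boundary cases $k=1$ and $k=n-1$ separately: for $k=1$ the white vertices would be left centers of circles with $-1$ dots on the left, of which there are none, and indeed $I_{-1,n}$ should be read as $0$, consistent with $2(k-1)(n-k)=0$; symmetrically for $k=n-1$.

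\medskip

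\textbf{Counting edges.} By Proposition \ref{prop: blackwhite3}(2), every vertex lies on exactly three edges. Every edge of the decomposition is a great circular arc lying on a perpendicular bisector, and (since the sphere is compact and the arrangement of bisectors is finite) each edge has two endpoints, both of which are vertices. Hence the standard incidence count $2(\#\text{edges}) = 3(\#\text{vertices})$ gives $\#\text{edges} = \tfrac{3}{2}\cdot 2(2nk-2k^2-n) = 3(2nk-2k^2-n)$. The one point needing care here is that no edge is a full great circle with no vertices on it, i.e. every maximal arc of a bisector that belongs to the decomposition terminates at vertices on both ends; this follows because along a perpendicular bisector the pair $(D_-, D_+)$ changes at the finitely many points where a third dot ties, and at least one such tie-point must occur (otherwise one dot would be globally tied with another for $k$th/$(k+1)$st place along an entire great circle, impossible in general position), so the arc is genuinely subdivided into edges with vertex endpoints.

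\medskip

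\textbf{Counting regions.} Now apply the Euler formula for the cell decomposition of the sphere $S$: $V - E + F = 2$, where $F$ is the number of regions (the decomposition is a genuine CW structure on $S^2$ since regions are spherical polygons by Proposition \ref{prop: stratumtypes}). Substituting $V = 2(2nk-2k^2-n)$ and $E = 3(2nk-2k^2-n)$ gives $F = 2 + E - V = 2 + (2nk-2k^2-n) = 2nk - 2k^2 - n + 2$, as claimed.

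\medskip

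\textbf{Expected main obstacle.} The computations are routine; the delicate part is the bookkeeping at the extreme values $k=1$ and $k=n-1$ (where one color class of vertices is empty), and, more importantly, justifying that the $k$th order Voronoi decomposition really is a finite CW decomposition of $S^2$ to which Euler's formula applies — i.e. that there are finitely many strata, that every edge has two vertex endpoints, that regions are simply connected (they are, being convex spherical polygons by Proposition \ref{prop: stratumtypes}), and that the closure-finiteness and boundary conditions hold. Most of this is implicit in Propositions \ref{prop: stratumtypes} and \ref{prop: blackwhite3}, so the proof is mainly a matter of assembling those facts cleanly before turning the crank on the arithmetic.
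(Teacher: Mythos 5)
Your proposal is correct and follows essentially the same route as the paper: count the vertices as $I_{k-2,n}+I_{k-1,n}$ via Propositions \ref{prop: orientedincident} and \ref{prop: blackwhite3}, use $3$-regularity to get the edge count as $\tfrac{3}{2}$ times the vertex count, and finish with the Euler characteristic of the sphere. The extra care you take with the boundary cases $k=1$, $k=n-1$ and with verifying that the decomposition is a genuine CW structure is reasonable diligence but does not change the argument.
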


\begin{proof}
By Propositions \ref{prop: orientedincident} and \ref{prop: blackwhite3}.1, the number of vertices is
\[
I_{k-1,n} + I_{k-2,n} = 2k(n-k-1) + 2(k-1)(n-k) = 2(2nk-2k^2 -n)
\]
Since every edge contains 2 vertices and every vertex is in 3 edges (Proposition \ref{prop: blackwhite3}.2), the number of edges is $\frac{3}{2}$ times the number of vertices; that is, $3(2nk-2k^2 -n)$.
Finally, since the Euler characteristic of the sphere is $2$, the number of regions is
\[
2 - \text{(\# of vertices}) + \text{(\# of edges)} = 2nk - 2k^2 - n + 2
\qedhere
\]
\end{proof}

\begin{ex}
The stereographic projection of a $2$nd order Voronoi decomposition of the sphere determined by $n=6$ dots is given in Figure \ref{fig: sphericalVoronoi}. It has
$I_{0,6}= 8$ white dots, $I_{1,6} =12$ black dots, $30$ edges, and $12$ regions; this matches the $(k,n)=(2,6)$ case of Theorem \ref{thm: stratacount}.
\end{ex}

Finally, we consider a special case where additional structure appears. Recall that the sphere has a distinguished \textbf{antipodal symmetry}  $\sigma$ which sends every point to its \emph{antipode} (the furthest point on the sphere); this is the unique non-trivial symmetry which sends every great circle to itself.

\begin{prop}\label{prop: antipodal}
Let $n>3$ be even, and let $D$ be a configuration of $n$-many dots in general position in the sphere. Then the $\frac{n}{2}$th order Voronoi decomposition of the sphere is preserved by the antipodal symmetry $\sigma$ (that is, $\sigma$ sends strata to strata), and $\sigma$ swaps the colors of the vertices.
\end{prop}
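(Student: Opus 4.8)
The plan is to analyze how the antipodal map $\sigma$ affects the distance-rank function on the dots. The key observation is that for any point $p\in S$ and any dot $d\in D$, the geodesic distances satisfy $\dis(\sigma(p),d) = \pi - \dis(p,d)$ (taking the sphere to have radius $1$), since $\sigma(p)$ is the antipode of $p$. This is a strictly decreasing bijection on the interval $[0,\pi]$, so it reverses the weak ordering of the dots by distance: if $d_1,d_2,\dotsc,d_n$ is an indexing of $D$ by weakly increasing distance from $p$, then $d_n,d_{n-1},\dotsc,d_1$ is an indexing by weakly increasing distance from $\sigma(p)$. In particular, the $j$th closest dot(s) to $\sigma(p)$ are precisely the $(n+1-j)$th closest dot(s) to $p$.

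First I would use this to show $\sigma$ sends $\frac{n}{2}$th order Voronoi strata to $\frac{n}{2}$th order Voronoi strata. Write $k = n/2$. From the distance-reversal above, applied with the roles of ``$k$th closest'' and ``$(k+1)$st closest'', we get
\[
D_-(\sigma(p)) = \{ d \in D \mid \dis(\sigma(p),d) \le \dis(\sigma(p),d_{n-k}) \} = \{ d \in D \mid \dis(p,d) \ge \dis(p,d_{k+1}) \} = D_+(p),
\]
and symmetrically $D_+(\sigma(p)) = D_-(p)$, where I have used $n - k = k$ crucially (this is the only place evenness and the order being exactly $n/2$ enters). Since a stratum $V$ is determined by the pair $(D_-(V), D_+(V))$ and the defining condition ``$D_-(p) \subseteq D_-(q)$ and $D_+(p)\subseteq D_+(q)$'' is visibly symmetric under simultaneously swapping $D_- \leftrightarrow D_+$ on both $p$ and $q$, the image $\sigma(V)$ is again a stratum, namely the one with $D_-(\sigma(V)) = D_+(V)$ and $D_+(\sigma(V)) = D_-(V)$. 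Hence $\sigma$ permutes the strata.

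Finally I would check the color swap. A vertex $p$ is white iff there are dots tied for $(k-1)$st, $k$th, and $(k+1)$st closest to $p$, and black iff there are dots tied for $k$th, $(k+1)$st, and $(k+2)$nd closest. Under $\sigma$, ``$j$th closest to $p$'' becomes ``$(n+1-j)$th closest to $\sigma(p)$'', so the ranks $\{k-1,k,k+1\}$ at $p$ become $\{n+1-(k+1), n+1-k, n+1-(k-1)\} = \{n-k, n-k+1, n-k+2\} = \{k, k+1, k+2\}$ at $\sigma(p)$, again using $n = 2k$. Thus a white vertex maps to a black vertex, and symmetrically a black vertex maps to a white one; since $\sigma$ already preserves the vertex/edge/region stratification, this is exactly the claim that $\sigma$ swaps the colors.

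I expect no serious obstacle here; the one point requiring care is making sure the bookkeeping of ranks under distance-reversal is done with the correct off-by-one indices and that evenness of $n$ together with the order being exactly $n/2$ is invoked precisely where ``$n-k=k$'' is needed — everywhere else the argument is purely formal manipulation of the sets $D_\pm$.
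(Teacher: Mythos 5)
Your proposal is correct and follows essentially the same route as the paper: the antipodal map reverses the weak ordering of dots by distance, hence swaps $D_-(p)$ and $D_+(p)$ precisely because $n-k=k$, which sends strata to strata and exchanges the two vertex colors. The only blemish is the subscript in your first displayed equality --- the $k$th closest dot to $\sigma(p)$ is $d_{n+1-k}=d_{k+1}$, not $d_{n-k}$ --- exactly the off-by-one you warned yourself to watch for, and it does not affect the conclusion.
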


\begin{proof}
%
The antipodal symmetry reverses the weak ordering of the dots by distance; that is,
\[
\dis(p,d_i) \leq \dis( p,d_j) \Leftrightarrow \dis(\sigma(p) , d_i) \geq \dis(\sigma(p),d_j)
\]
Therefore, $D_-(\sigma(p)) = D_+(p)$ and $D_+(\sigma(p)) = D_-(p)$.

If $p$ is a white vertex, then $|D_-(p)|=\frac{1}{2}n+1$ and $|D_+(p)|=\frac{1}{2}n+2$. Therefore, $|D_-(\sigma(p))|=\frac{1}{2}n+2$ and $|D_+(\sigma(p))|=\frac{1}{2}n+1$ and so $\sigma(p)$ is a black vertex. Similar considerations show that $\sigma$ sends black vertices to white vertices, edges to edges, and regions to regions.
%
\end{proof}

\section{Counting avoidant circles}

We return our attention to counting circles. Given a finite configuration of dots $D$ in the plane or sphere, an \textbf{avoidant circle} is a circle which passes through none of the dots. An avoidant circle induces a partition of the dots $D$ into two parts. 
We say two (unoriented) avoidant circles are \textbf{equivalent} if they induce the same set partition of the dots $D$.

As before, it will be useful to sometimes consider \textbf{oriented avoidant circles}, as this lets us distinguish between the two sides and centers. We say two oriented avoidant circles are \textbf{equivalent} if they have the same set of dots on their left sides (and so the same set of dots on their right sides).




\begin{prop}\label{prop: Voronoivertex}
For any subset $D_-\subset D$ and any point $p$, the following are equivalent.
\begin{enumerate}
    \item $p$ is the left center of an oriented circle with $D_-$ on its left and $D_+:=D\smallsetminus D_-$ on its right.
    \item $\dis(p,d_-) < \dis(p,d_+) $ for every $d_-\in D_-$ and $d_+\in D_+:=D\smallsetminus D_-$.
    \item $p$ is in the interior of a higher order Voronoi region $V$ with $D_-(V)=D_-$ and $D_+(V) = D\smallsetminus D_-$.
\end{enumerate}
\end{prop}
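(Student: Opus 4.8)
The plan is to prove the three conditions equivalent by showing (1) $\Leftrightarrow$ (2) and (2) $\Leftrightarrow$ (3) separately, as each pair is essentially a direct unwinding of definitions combined with one geometric fact about the sphere.

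For (1) $\Leftrightarrow$ (2), I would argue as follows. Given an oriented circle $C$ in $S$, it is the intersection of $S$ with an affine plane in $\mathbb{R}^3$, and its two centers are the two points of $S$ on the line through the center of that affine plane perpendicular to it. Fixing the orientation picks out the \textbf{left center} $p$, and the key observation is that for a point $q\in S$, the geodesic distance $\dis(p,q)$ is a strictly increasing function of the Euclidean distance from $q$ to $p$ in $\mathbb{R}^3$ (equivalently, a strictly decreasing function of $\langle p, q\rangle$ for $p,q$ on the unit sphere). Consequently, the locus $\{q : \dis(p,q) = r\}$ is exactly a circle centered at $p$ (for $0 < r < \pi$), dots strictly inside that circle (on the side containing $p$'s hemisphere near $p$... more precisely, the side of the circle containing $p$) are exactly the dots $d$ with $\dis(p,d) < r$, and dots strictly outside are those with $\dis(p,d) > r$. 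So $D_-$ lies on the left side of $C$ (the side containing $p$, since $p$ is the left center) and $D_+$ on the right side if and only if every $d_-\in D_-$ is closer to $p$ than the radius and every $d_+\in D_+$ is farther; since $D$ is finite and the dots avoid $C$, we can always choose the radius of a suitable circle to lie strictly between $\max_{d_-}\dis(p,d_-)$ and $\min_{d_+}\dis(p,d_+)$ precisely when that max is strictly less than that min, which is condition (2). I should be slightly careful about which center is the ``left'' one versus which side of $C$ it lies on — the convention should be that the left center lies in (the closure of) the left side — and I would make that convention explicit, perhaps referencing how ``left side'' and ``left center'' were set up for incident circles in Propositions \ref{prop: orientedincident} and \ref{prop: blackwhite3}.

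For (2) $\Leftrightarrow$ (3), recall from the definition of $k$th order Voronoi strata and from Proposition \ref{prop: stratumtypes} that the interior of a region $V$ with $D_-(V)=D_-$, $D_+(V)=D\smallsetminus D_-$ consists exactly of those $q$ with $D_-\subseteq D_-(q)$, $D_+\subseteq D_+(q)$, and $D_-(q)\cap D_+(q)=\emptyset$ (the last being the condition, from Proposition \ref{prop: stratumtypes}, that $q$ is in the interior of a region rather than on an edge or vertex). Unwinding the definitions of $D_-(q)$ and $D_+(q)$ in terms of the distance ordering, this says precisely that the $|D_-|$ dots closest to $q$ are exactly the dots of $D_-$, with a strict gap in distance between the $|D_-|$th-closest and the $(|D_-|+1)$st-closest — which is exactly $\dis(q,d_-) < \dis(q,d_+)$ for all $d_-\in D_-$, $d_+\in D_+$, i.e.\ condition (2). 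One small point: this forces $k = |D_-|$, so implicitly condition (3) only has content when $0 < |D_-| < n$, which matches the standing hypothesis $0<k<n$; I would note this.

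I do not expect a serious obstacle here — the statement is a bookkeeping lemma tying together the circle picture and the Voronoi picture. The one place requiring genuine care is the geometric fact underlying (1) $\Leftrightarrow$ (2): that ``circles on the sphere'' in the metric sense (geodesic balls' boundaries) coincide with ``circles'' in the sense of plane sections of $S$, and that the left center of such a circle is its metric center with the side containing it being the smaller geodesic ball. This is standard spherical geometry, but I would state it cleanly as the monotone relationship between $\dis(p,q)$ and $\langle p,q\rangle$ and let the rest follow. Everything else is a matter of carefully expanding the definitions of $D_-(p)$, $D_+(p)$, ``left side'', and ``interior of a region.''
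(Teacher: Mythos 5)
Your proposal is correct and follows essentially the same route as the paper: (1)$\Leftrightarrow$(2) via the observation that a circle of radius $r$ centered at $p$ has left side consisting of the points at distance less than $r$, so a separating radius exists exactly when $\max_{d_-}\dis(p,d_-)<\min_{d_+}\dis(p,d_+)$; and (2)$\Leftrightarrow$(3) by unwinding the definition of a region's interior as in the proof of Proposition \ref{prop: stratumtypes}. The paper's version is just terser, omitting the explicit discussion of the metric-versus-planar-section description of spherical circles that you spell out.
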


\begin{proof}
If $C$ is the circle consisting of points of distance $r$ from $p$, and $C$ is oriented so that $p$ is on the left side, then the dots on the left side of $C$ are those of distance less than $r$ from $p$. Therefore, there is an oriented avoidant circle with left center at $p$ if and only if the dots in $D_-$ are closer to $p$ than the dots in $D_+$. As was shown in the proof of Proposition \ref{prop: stratumtypes}, the interior of the $k$th Voronoi region $V$ is defined by points $p$ with $D_-(V)=D_-(p)$ and $D_+(V)=D_+(p)$.
\end{proof}





This establishes a bijection between regions in the $k$th order Voronoi regions and equivalence classes of oriented avoidant circles with $k$-many dots on their left. 

\begin{prop}\label{prop: orientedavoidant}
Let $0<k<n>3$. Given $n$-many dots in general position in the sphere or plane, there are 
$
(2nk -2k^2 - n + 2)
$-many oriented avoidant circles (up to equivalence) which have $k$-many dots on their left side.
\end{prop}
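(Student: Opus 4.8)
The plan is to deduce Proposition~\ref{prop: orientedavoidant} directly from the bijection established by Proposition~\ref{prop: Voronoivertex} together with the region count from Theorem~\ref{thm: stratacount}. First I would observe that Proposition~\ref{prop: Voronoivertex} gives, for each equivalence class of oriented avoidant circles with $k$-many dots on its left side, a nonempty open set of possible left centers, namely the interior of a $k$th order Voronoi region $V$ with $D_-(V)$ equal to the prescribed left-hand set of dots. Conversely, every $k$th order Voronoi region $V$ arises this way: its interior is nonempty (Proposition~\ref{prop: stratumtypes} shows regions are spherical polygons, which have nonempty interior), and any interior point $p$ is the left center of an oriented avoidant circle with $D_-(V)$ on its left, by Proposition~\ref{prop: Voronoivertex} again. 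Distinct regions have distinct pairs $(D_-(V), D_+(V))$, hence correspond to inequivalent oriented avoidant circles, and $|D_-(V)| = k$ exactly because we are in the $k$th order decomposition. This yields a bijection between $k$th order Voronoi regions and equivalence classes of oriented avoidant circles with $k$ dots on the left.

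Having set up the bijection, I would then simply invoke the third bullet of Theorem~\ref{thm: stratacount}, which counts the regions of the $k$th order Voronoi decomposition of the sphere as $2nk - 2k^2 - n + 2$. This immediately gives the stated count in the spherical case.

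Finally, I would handle the planar case by transferring along stereographic projection, exactly as in the proof of Theorem~\ref{thm: incident}. The subtlety flagged in the paper's earlier remark is that planar higher order Voronoi regions do \emph{not} correspond to avoidant circles with $k$ points on the left in a configuration-independent way, so one cannot argue intrinsically in the plane. Instead, given a configuration of dots in the plane, choose a stereographic projection $\pi$ and the associated point $p_\infty$ on the sphere, and use that $\pi$ induces a bijection between circles in the plane and circles in the sphere not through $p_\infty$, preserving the set of dots on each side. One checks that an oriented avoidant circle in the plane with $k$ dots on a chosen side corresponds to an oriented avoidant circle in the sphere with the same $k$ dots on the corresponding side, and that equivalence of oriented avoidant circles is preserved; hence the count in the plane equals the count in the sphere.

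The main obstacle is not in the count itself — that is immediate from Theorem~\ref{thm: stratacount} — but in being careful about the bijection's bookkeeping: verifying that every region truly has nonempty interior so that it contributes a genuine equivalence class of avoidant circles (rather than, say, a ``degenerate'' empty region), and verifying that no two distinct regions collapse to the same equivalence class. Both points follow from Propositions~\ref{prop: stratumtypes} and \ref{prop: Voronoivertex}, so the argument is short, but the statement being about oriented circles (and the left/right asymmetry) means one should be explicit that the correspondence respects orientation. For the plane, the only care needed is to confirm that stereographic projection does not secretly change which partitions of the dots are realizable — which is exactly the content of the circle-to-circle bijection recalled in Section~2.
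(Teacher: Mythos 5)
Your proposal is correct and follows essentially the same route as the paper: reduce to counting $k$th order Voronoi regions via Proposition \ref{prop: Voronoivertex}, apply Theorem \ref{thm: stratacount}, and transfer to the plane by stereographic projection. The extra bookkeeping you supply (nonempty interiors, injectivity of the region-to-partition correspondence) is a welcome elaboration of what the paper leaves implicit, but it is not a different argument.
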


\begin{proof}
The previous proposition reduces this to a count of $k$th order Voronoi regions. In the case of the sphere, Theorem \ref{thm: stratacount} shows the number of such regions is $2nk-2k^2-n+2$.
Since equivalence classes of oriented avoidant circles are preserved by stereographic projection, there are the same number in the plane.
\end{proof}

We can then deduce the corresponding count of unoriented avoidant circles.

\begin{rethm}{\ref{thm: avoidant}}
Given at least four dots in general position in the plane or sphere, the number of partitions of the dots into subsets of size $k$ and $\ell$ which can be separated by a circle is
    \[
    \left\{
    \begin{array}{cc}
    2k \ell - k - \ell + 2 &\text{if $k\neq \ell$} \\
    k^2 - k + 1 &\text{if $k= \ell$} \\
    \end{array}
    \right\}
    \]
\end{rethm}

\begin{proof}
Since no dots lie on an avoidant circle, there must be $n:=k+\ell$ dots. 

If $k\neq \ell$, then such an avoidant circle has a unique orientation with $k$-many dots on its left side. By Proposition \ref{prop: orientedavoidant}, the number of such oriented avoidant circles in the sphere is 
\[
2(k+\ell)k -2k^2 - (k+\ell) + 2
=
2k \ell - k - \ell + 2
\]

If $k=\ell$, then then such an avoidant circle has two orientations with $k$-many dots on the left side. By Proposition \ref{prop: orientedavoidant}, the number of such oriented avoidant circles in the sphere is 
\[
2(2k)k -2k^2 - 2k + 2
= 2k^2 - 2k + 2
\]
Dividing by $2$, there are $k^2 - k + 1$ unoriented avoidant circles with $k$-many dots on each side.
\end{proof}

\section{Moving the dots}

\def\LConf{\operatorname{LConf}}
\def\Conf{\operatorname{Conf}}

We would like to understand how the preceding constructions (incident circles, avoidant circles, and higher order Voronoi decompositions) vary as we continuously deform the configuration of dots.
We do this by considering the \textbf{configuration space of $n$-many labeled dots in the sphere $S$}, which is the following topological space:
\[
\LConf_n(S) := \{ (d_1,d_2,\dotsc,d_n) \in S\times S\times \dotsb \times S \text{ such that, for all $i\neq j$, $d_i\neq d_j$}\}
\]
as well as the \textbf{configuration space of $n$-many unlabeled dots in the sphere $S$}, which is the quotient of $\LConf_n(S)$ by the action of the symmetric group $\Sigma_n$ by permuting the labels:
\[
\Conf_n(S) := \LConf_n(S) / \Sigma_n
\]
Since the symmetric group is finite and acts freely, the quotient map 
\[
\LConf_n(S)\rightarrow \Conf_n(S)
\]
is an $n!$-sheeted covering map. Since $S$ is a 2-dimensional manifold, $\LConf_n(S)$ and $\Conf_n(S)$ are $2n$-dimensional manifolds.
%
%

A configuration $D$ of $n$-many (unlabeled) dots in $S$ can be regarded as a point in $\Conf_n(S)$, and each way of indexing those points by the numbers $1,2,\dotsc,n$ gives a point in $\LConf_n(S)$.
Define
\[
\LConf_n^g(S)
\subset
\LConf_n(S)
\text{ and }
\Conf_n^g(S)
\subset
\Conf_n(S)
\]
to be the subsets consisting of configurations of (labeled or unlabeled) dots in general position. 

Unfortunately, nothing interesting will happen if we vary a configuration of dots inside $\Conf_n^g(S)$. The incident circles, avoidant circles, and higher order Voronoi decompositions will deform but not undergo any qualitative changes. 
To get a more interesting answer, we have to consider what happens as a configuration `crosses a wall' in $\Conf_n(S)$ to a different component of $\Conf_n^g(S)$.


%
These \emph{walls} are easiest to describe in the labeled configuration space $\LConf_n(S)$. For each $4$-element subset $I\subset \{1,2,\dotsc,n\}$, let
$
W_I\subset \LConf_n(S)
$
denote the subset in which the four dots labeled by $I$ are cocircular. The dimensions of these subsets and their intersections are as follows.

\begin{lemma}
Let $n$ be a positive integer.
\begin{enumerate}
    \item For each quadruple $I\subset \{1,2,\dotsc,n\}$, $W_I$ is a submanifold of dimension $2n-1$ in $\LConf_n(S)$.
    \item For any two distinct quadruples $I,J\subset \{1,2,\dotsc,n\}$, the intersection $W_I\cap W_J$ is a submanifold of dimension $2n-2$ in $\LConf_n(S)$.
\end{enumerate}
\end{lemma}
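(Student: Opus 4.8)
The plan is to realize each $W_I$ as the zero set of a single smooth function with non-vanishing differential, and then to handle the intersection by showing the relevant pair of functions has surjective differential.

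First I would set up a cocircularity function. Fix the embedding of $S$ in $\mathbb{R}^3$. Four points $d_i,d_j,d_k,d_l$ on $S$ are cocircular if and only if they lie on a common affine plane in $\mathbb{R}^3$, which happens exactly when the $4\times 4$ determinant
\[
f_I(d_1,\dotsc,d_n) := \det\begin{pmatrix} 1 & d_i \\ 1 & d_j \\ 1 & d_k \\ 1 & d_l \end{pmatrix}
\]
(where each $d$ contributes a row of its three coordinates) vanishes; here $I=\{i<j<k<l\}$. This $f_I$ is the restriction to $S^n$ of a polynomial on $(\mathbb{R}^3)^n$, hence smooth on $\LConf_n(S)$, and $W_I = f_I^{-1}(0)$. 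To see $W_I$ is a codimension-$1$ submanifold it suffices to check that $0$ is a regular value, i.e.~that $df_I\neq 0$ at every point of $W_I$. At a point where the four dots are cocircular but in general position (in particular, since $n$ can be larger I should be careful: $W_I$ only constrains the four dots in $I$, so I only need those four to not be degenerate as three-on-a-plane, which is automatic since three distinct points of $S$ are never collinear and four distinct points not all on one plane-through-a-great-circle are the generic situation), I can move a single dot $d_i$ transversally off the plane spanned by $d_j,d_k,d_l$; this changes the determinant to first order, so the partial derivative of $f_I$ in that direction is nonzero. The one thing to verify is that the three points $d_j,d_k,d_l$ actually span an affine plane, equivalently are not collinear in $\mathbb{R}^3$ — but three distinct points on a sphere are never collinear, so this always holds. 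Hence $df_I\neq 0$ on $W_I$, proving (1).

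For (2), let $I\neq J$ be two quadruples and suppose $p\in W_I\cap W_J$. I want to show $(df_I, df_J)\colon T_p\LConf_n(S)\to\mathbb{R}^2$ is surjective, so that $W_I\cap W_J = (f_I,f_J)^{-1}(0)$ is a codimension-$2$ submanifold near $p$. Since $I\neq J$, there is an index, say $i_0$, lying in $I$ but not in $J$ (or symmetrically in $J\setminus I$). Varying $d_{i_0}$ does not change $f_J$ at all, while (by the argument in (1), applied to $I$) we can choose the variation of $d_{i_0}$ to make $df_I$ nonzero. Symmetrically pick $j_0\in J\setminus I$ and a variation of $d_{j_0}$ making $df_J$ nonzero while fixing $f_I$. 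Since $i_0\neq j_0$ these two variations live in complementary coordinate blocks of $T_p\LConf_n(S)$, so together they show the map $(df_I,df_J)$ hits a basis of $\mathbb{R}^2$, hence is surjective. Therefore $0$ is a regular value of $(f_I,f_J)$ and $W_I\cap W_J$ has dimension $2n-2$.

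The main obstacle I anticipate is purely bookkeeping rather than conceptual: making sure the claimed transversal variation of a single dot genuinely exists and is tangent to $S$ (not just to $\mathbb{R}^3$), and confirming that the three remaining dots in a quadruple always span a genuine plane so the determinant is non-degenerate in the required direction. Both reduce to the elementary fact that distinct points on a sphere are never collinear in $\mathbb{R}^3$, together with the observation that at a point of $W_I$ the plane through any three of the four dots is a well-defined plane that the fourth dot lies on, and one can always tilt a dot off a given plane along a tangent direction of $S$. I would also remark that $I\neq J$ is essential: when $I=J$ the two conditions coincide and the intersection is just $W_I$, of dimension $2n-1$, consistent with the statement's hypothesis that the quadruples be distinct.
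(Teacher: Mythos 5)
Your proof is correct, and it takes a genuinely different route from the paper. You realize $W_I$ as the zero locus of the explicit affine-dependence determinant $f_I$ and verify that $0$ is a regular value (resp.\ that $(f_I,f_J)$ has surjective differential), whereas the paper fibers $\LConf_n(S)$ over $\LConf_{n-1}(S)$ by forgetting a dot indexed in $I\smallsetminus J$ and exhibits $W_I$ (resp.\ $W_I\cap W_J$) as a bundle whose fibers are the $1$-dimensional locus of positions for that dot on the circle through the other three. Both arguments hinge on the same two elementary facts — three distinct points of $S$ are never collinear, and the plane through four cocircular dots is secant rather than tangent, so one can tilt a dot off it in a direction tangent to $S$ — and your key observation for part (2), that $I\smallsetminus J$ and $J\smallsetminus I$ are both nonempty so the two differentials can be made nonzero in disjoint coordinate blocks, plays exactly the role of the paper's choice to forget a dot lying in $I$ but not $J$. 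The regular-value argument is arguably the more standard piece of differential topology and generalizes readily to larger collections of walls; the fibration argument gives slightly more structure (an explicit bundle description of each wall) at no extra cost. The one point you flag but do not fully discharge — that the tangent plane $T_{d_{i_0}}S$ is never equal to the plane through the four dots — follows immediately from the fact that the latter meets $S$ in a circle rather than a single point, so the gap is cosmetic.
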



\begin{proof}
Permuting indices as needed, we may assume $n\in I$ and consider the map
\[
\LConf_n(S)\rightarrow \LConf_{n-1}(S)
\]
which deletes the $n$th dot $d_n$. The fiber over a labeled configuration $(d_1,d_2,\dotsc,d_{n-1})\in \LConf_{n-1}(S)$ is the space of choices of $d_n$, which may be identified with $S\smallsetminus \{d_1,d_2,\dotsc,d_{n-1}\}$.

If we let $i,j,k$ be the other three elements in $I$, then the restriction of $W_I$ to this fiber consists of points in $S\smallsetminus \{d_1,d_2,\dotsc,d_{n-1}\}$ on the circle through $d_i,d_j,d_k$; this consists of disjoint open intervals, which are submanifolds of dimension $1$. Varying the configuration $(d_1,d_2,\dotsc,d_{n-1})\in \LConf_{n-1}(S)$ continuously varies these intervals, and so
$
W_I\rightarrow \LConf_{n-1}(S)
$ is a bundle of dimension $1$ over $\LConf_{n-1}(S)$. Since $\LConf_{n-1}(S)$ has dimension $2n-2$, the dimension of $W_I$ is $2n-1$.

Let $J$ be a distinct quadruple of indices from $I$; permuting indices as needed, we may assume $n\in I$ and $n\not\in J$. Letting $W_J\subset \LConf_n(S)$ and $W_J'\subset \LConf_{n-1}(S)$ be the subsets where the dots indexed by $J$ are cocircular, the restriction of the map $\LConf_n(S)\rightarrow \LConf_{n-1}(S)$ to $W_J'\subset \LConf_{n-1}(S)$ is the map $W_J\rightarrow W_J'$, and the restriction of $W_I\subset \LConf_n(S)$ is the map $W_I\cap W_J\rightarrow W_J'$. Therefore, $W_I\cap W_J\rightarrow W_J'$ is a bundle of dimension $1$ over $W_J'$. By the previous part, $W_J'$ has dimension $2n-3$, so the dimension of $W_I\cap W_J$ is $2n-2$.
%
%
\end{proof}

The lemma implies that double intersections like $W_I\cap W_J$ are too small to disconnect $\LConf_n(S)$, and therefore any two points in $\LConf_n(S)$ can be connected by a path that crosses one wall at a time. This can be rephrased and strengthened as follows.

\begin{prop}\label{prop: semigeneral}
Any two configurations $D,D'$ of $n$-many (labeled or unlabeled) dots in general position can be connected by a family $D(t)$ of configurations parametrized by $t$, such that
\begin{enumerate}
    \item for all but finitely many $t$, $D(t)$ is in general position, and
    \item for all $t$, $D(t)$ is in \textbf{semigeneral position}; that is, at most one quadruple of dots is cocircular.
\end{enumerate}
\end{prop}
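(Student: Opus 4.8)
The plan is to leverage the dimension count in the preceding lemma together with a standard transversality/general position argument for paths in manifolds. The statement is really two claims packaged together: (1) we can connect $D$ to $D'$ by a path that meets the union $\bigcup_I W_I$ in only finitely many parameter values, and (2) we can do so while avoiding all the double intersections $W_I\cap W_J$ entirely. Since being in general position means avoiding $\bigcup_I W_I$ and being in semigeneral position means avoiding $\bigcup_{I\neq J}(W_I\cap W_J)$, the two claims together give exactly the proposition.

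First I would reduce to the labeled case: a path in $\LConf_n^g(S)$ between two lifts of $D$ and $D'$ projects to a path in $\Conf_n(S)$, and the conditions in (1) and (2) are invariant under the $\Sigma_n$-action, so it suffices to work upstairs. (One should note $\LConf_n(S)$ is connected for $n\ge 1$ since $S$ is a connected manifold of dimension $2>1$, so some path exists; this is where the relabeling freedom in the unlabeled case is also subsumed, since any two lifts of the same configuration lie in the same connected space.) Next, fix an arbitrary smooth path $\gamma:[0,1]\to\LConf_n(S)$ from $D$ to $D'$. Each $W_I$ is a codimension-$1$ submanifold and each $W_I\cap W_J$ is a codimension-$2$ submanifold by the lemma. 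By the Thom transversality theorem (applied to maps of a $1$-manifold into $\LConf_n(S)$), we may perturb $\gamma$ rel endpoints to a path $\gamma'$ that is transverse to every $W_I$ and to every $W_I\cap W_J$ simultaneously — there are only finitely many such submanifolds, so finitely many transversality conditions to impose. Transversality of a $1$-manifold to a codimension-$2$ submanifold means the path misses it entirely, giving (2); transversality to each codimension-$1$ submanifold $W_I$ means $\gamma'$ meets $W_I$ in a $0$-dimensional, hence (by compactness of $[0,1]$) finite, set of parameters, and taking the union over the finitely many $I$ gives (1). Setting $D(t):=\gamma'(t)$ and reparametrizing by $t\in\RR$ if desired completes the argument.

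The main obstacle — or rather the point requiring the most care — is ensuring the perturbation keeps the path inside $\LConf_n(S)$ (so dots never collide) and fixes the endpoints in general position; but since $\LConf_n(S)$ is an open manifold and $D,D'$ already lie in the open set of general-position configurations, a sufficiently small perturbation supported away from $t=0,1$ stays in $\LConf_n(S)$ and keeps the endpoints untouched, so this is routine. A secondary point is that the transversality theorem as usually stated handles one submanifold at a time; one handles finitely many simultaneously either by a single application to the disjoint union, or by perturbing successively and noting each successive perturbation can be taken small enough to preserve the (open) transversality conditions already achieved. I would also remark that one does not even need the full strength of Thom transversality here: since the $W_I$ and $W_I\cap W_J$ are submanifolds of positive codimension, generic paths avoid the codimension-$2$ pieces and meet the codimension-$1$ pieces transversally, which can be seen by a direct Sard-type argument, but citing transversality is cleanest.
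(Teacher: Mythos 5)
Your argument is correct and follows essentially the same route as the paper: both rest on the codimension count from the preceding lemma, avoid the codimension-$2$ loci $W_I\cap W_J$ outright, and arrange discrete (hence finite, by compactness) intersections with each codimension-$1$ wall $W_I$, with the unlabeled case handled by lifting to $\LConf_n(S)$. The only cosmetic difference is that you package the perturbation as one application of Thom transversality, whereas the paper first chooses the path inside the complement of the double intersections (path-connected since they have codimension $2$) and then deforms it in tubular neighborhoods of the $W_I$.
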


Such a family $D(t)$ will be called a \textbf{semigeneral family} from $D$ to $D'$. 

\begin{proof}
Consider the case when $D,D'$ are labeled; that is, $D,D'\in \LConf_n^g(S)$.
%
Consider the subset
\[ \LConf_n^{sg}(S) : = \LConf_n(S) \smallsetminus \bigcup (W_I\cap W_J) \]
where the union runs over all distinct quadruples of indices $I$ and $J$. Points in $\LConf_n^{sg}(S)$ correspond to labeled configurations in semigeneral position.
Since $\LConf_n(S)$ is path-connected and each $W_I\cap W_J$ has codimension $2$ inside $\LConf_n(S)$, $\LConf_n^{sg}(S)$ is also path-connected. In particular, we may choose a path ${D}(t)$ from ${D}$ to ${D}'$ in $\LConf_n^{sg}(S)$ parametrized by $t$. 

For each quadruple $I$, we consider the intersections of $W_I$ with ${D}(t)$. By deforming ${D}(t)$ in a tubular neighborhood of $W_I$, we may assume that the set of intersections is discrete (i.e.~without accumulation points). Since the path ${D}(t)$ is a compact set, this implies the set of intersections with $W_I$ is finite. Repeating for each $I$, we conclude that ${D}(t)$ has finitely many intersection points with the union of the $W_I$; equivalently, finitely many points where it is not in general position.

In the case of unlabeled $D$ and $D'$, choose arbitrary labelings and a semigeneral family between them in $\LConf_n^{sg}(S)$. The image of this path in $\Conf_n(S)$ is a semigeneral family from $D$ to $D'$.
%
\end{proof}

\section{Relations between higher order Voronoi decompositions}\label{sec: Relations}

To understand how higher order Voronoi decompositions change as we vary the configuration of dots, the proposition says that it suffices to understand how they change as a family of configurations crosses a single semigeneral wall. 

Recall from Section \ref{sec: higher Voronoi} that the vertices of the $k$th order Voronoi decomposition can be colored according to the number of dots on the left of the corresponding incident circle. 

\begin{lemma}\label{lemma: move}
Let $0<k<n>3$, let $D,D'\in \Conf^g_n(S)$, and 
Let $D(t)$ be a semigeneral family from $D$ to $D'$ which is general except at one value of $t$. Then the $k$th order Voronoi decompositions of $D$ and $D'$ are related by either
\begin{enumerate}
    \item one of the moves in Figure \ref{fig: postnikovmoves}, or
    \item two of the moves in Figure \ref{fig: postnikovmoves}, applied in a pair of antipodal circles.
\end{enumerate}


\end{lemma}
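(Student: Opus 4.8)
The plan is to analyze what happens to the $k$th order Voronoi decomposition at the single value $t_0$ where $D(t_0)$ fails to be in general position, i.e.\ where a unique quadruple of dots $\{a,b,c,d\}$ becomes cocircular on a circle $C$. Everything in the decomposition is controlled by incident circles and their centers (Propositions \ref{prop: stratumtypes} and \ref{prop: blackwhite3}), so I would first observe that the only strata that can change are those whose defining sets $D_-(V), D_+(V)$ differ only by membership of dots among $\{a,b,c,d\}$; away from a neighborhood of the center(s) of $C$, the decomposition varies continuously and combinatorially trivially. So the problem localizes to a small disk around each of the two centers of $C$.

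Next I would enumerate, just before and just after $t_0$, which incident circles through triples from $\{a,b,c,d\}$ are relevant. Among the four triples $\{b,c,d\},\{a,c,d\},\{a,b,d\},\{a,b,c\}$, each has an incident circle; as $t$ passes through $t_0$ these four circles all pass through (degenerate at) the common circle $C$, and their centers collide at the center(s) of $C$ and then re-emerge in a different combinatorial pattern. The key local fact is that the number of dots among $\{a,b,c,d\}$ on a given side of each such incident circle changes in a controlled way as the quadruple "flips" from one cocircular side to the other; concretely, whether the fourth dot is inside or outside the circle through the other three switches for exactly two of the four triples, or — depending on the cyclic type of the flip — for all four. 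Matching these counts against the coloring rule (white $=$ $(k-2)$ dots on the left, black $=$ $(k-1)$ dots on the left) determines precisely which vertices appear, disappear, or change color near the center of $C$, and a direct check shows the local picture is exactly one of the three configurations on the left- or right-hand side of Figure \ref{fig: postnikovmoves}, with the two sides of the arrow corresponding to $t<t_0$ and $t>t_0$. This case analysis — there are a small number of cases according to which of $k-2$, $k-1$ the relevant counts land on, equivalently which "flavor" of move occurs — is the technical heart of the argument and the step I expect to be the main obstacle, since one must carefully track the left/right bookkeeping for oriented circles.

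Finally I would handle the dichotomy in the statement. A generic cocircular quadruple lies on a circle $C$ that is not a great circle, so $C$ has two distinct centers, which are antipodal points; the local changes occur independently in disjoint disks around each of these two centers. The left center of an oriented incident circle with $j$ dots on its left is the right center of the same circle oriented oppositely, which has $n-3-j$ dots on its left (equivalently $j+1$ dots on its right after reindexing); tracking this shows that the move happening near the second center is the "color-swapped" version of the move near the first, and by Proposition \ref{prop: antipodal}-style reasoning it is governed by the antipodal relation $D_-(\sigma p)=D_+(p)$. Thus either $C$ is a great circle (its two centers coincide as a single point — but this is a codimension-one condition within the already codimension-one wall, hence avoidable, or if one prefers to allow it, it still produces a single move) and we get case (1), or $C$ has two genuine centers and we get case (2): the same move, applied simultaneously in the two antipodal disks. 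I would close by noting that in all cases the "rest" of the decomposition is carried along by an ambient isotopy, so the two decompositions are literally related by applying the indicated local move(s).

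A cleaner way to organize case (2): having established the single-center local analysis, observe that the map $D(t)\mapsto$ (Voronoi decomposition) is equivariant under the antipodal symmetry $\sigma$ of $S$ in the sense that $\sigma$ carries the $k$th order strata of $D(t)$ to strata defined by swapping $D_-\leftrightarrow D_+$; since the two centers of $C$ are exchanged by $\sigma$ only when $n=2k$, and otherwise are simply two different points each requiring its own local move, the total change is one move ($n=2k$ and the flip is visible at only one center, or the surviving case) or two moves (generic), matching the statement. No new ingredients beyond Propositions \ref{prop: stratumtypes}, \ref{prop: blackwhite3}, \ref{prop: antipodal}, and \ref{prop: Voronoivertex} are needed; the work is entirely in the bounded local case analysis.
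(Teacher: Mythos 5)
Your local analysis near a single center is essentially the paper's argument: localize to small disks around the centers of the degenerating circle $C$, track the four incident circles through triples of the cocircular quadruple as their left centers collide and re-emerge, and match the dot-counts on their left sides against the coloring rule to identify one of the three local pictures in Figure \ref{fig: postnikovmoves}. (One small correction there: the four ``fourth dots'' switch sides of their respective circles simultaneously --- all four or none --- not ``exactly two of the four''; before the wall the sides alternate around the circle, e.g.\ $d_1$ left of $C_1$, $d_2$ right of $C_2$, $d_3$ left of $C_3$, $d_4$ right of $C_4$, and crossing the wall flips all of them at once.)

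The genuine gap is in your explanation of the one-move-versus-two-moves dichotomy. A circle on the sphere, great or not, always has two distinct antipodal centers; they never ``coincide as a single point,'' so the case split cannot be ``great circle versus not.'' Nor is it true that the two centers are exchanged by $\sigma$ only when $n=2k$ --- they are always exchanged; it is the Voronoi decomposition that is $\sigma$-invariant only when $n=2k$ (Proposition \ref{prop: antipodal}). The actual source of the dichotomy is arithmetic, not geometric: near the left center $a$ of $C$, the colliding points are vertices of the $k$th order Voronoi decomposition only if the number $|D_-|$ of dots on the left side of $C$ satisfies $|D_-|\in\{k-3,k-2,k-1\}$, while near the antipodal center $\sigma(a)$ the orientation-reversed circles are in play and the requirement becomes $n-4-|D_-|\in\{k-3,k-2,k-1\}$. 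These two windows overlap only when $k$ is within distance roughly $1$ of $\tfrac{1}{2}n$; for generic $k$ at most one of the two centers hosts any vertices at all, and the family produces a single move (or no change). So ``generically two moves, degenerately one'' is backwards. You need to run the same counting argument independently at both centers and let the counts, not the geometry of $C$, decide which case of the lemma occurs.
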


\begin{proof}
Consider a semigeneral family $D(t)$ which is general except at $t=0$. Choose an indexing of the dots so that $d_1(0),d_2(0),d_3(0),d_4(0)$ lie on a common circle, oriented so that it passes through the dots in that order. Let $C$ denote this oriented circle, and let $a$ be its left center.





For each $t$, let $C_1(t)$ denote the oriented incident circle through $d_2(t),d_3(t),d_4(t)$ (in that circular order), and let $a_1(t)$ denote the left center of $C_1(t)$. Similarly, let $C_2(t), C_3(t),C_4(t)$ denote the oriented incident circles through other three triples, and let $a_2(t),a_3(t),a_4(t)$ be their left centers. At $t=0$, these circles and their left centers all coincide, but they are distinct for other $t$. 

Therefore, we may choose a sufficiently small $\epsilon>0$ such that, for all $t\in [-\epsilon,\epsilon]$, the left centers $a_1(t),a_2(t),a_3(t),a_4(t)$ stay in an arbitrarily small circular neighborhood $U$ of $a$ (see Figure \ref{fig: crossingawall}).

\begin{figure}[h!t]
\[
\begin{tikzpicture}[scale=.5]
    \begin{scope}
        \draw[dashed] (0,0) circle (3.25);
        \node[dot] (1) at (180:1) {};
        \node[left] at (1) {$a_1(t)$};
        \node[dot] (2) at (90:1) {};
        \node[above] at (2) {$a_2(t)$};
        \node[dot] (3) at (0:1) {};
        \node[right] at (3) {$a_3(t)$};
        \node[dot] (4) at (270:1) {};
        \node[below] at (4) {$a_4(t)$};
        \node at (0,-4) {Shortly before $t=0$};
    \end{scope}
    \begin{scope}[xshift=8cm]
        \draw[dashed] (0,0) circle (3.25);
        \node[dot] (1) at (0,0) {};
        \node[above] at (1) {$a$};
        \node at (0,-4) {$t=0$};
    \end{scope}
    \begin{scope}[xshift=16cm]
        \draw[dashed] (0,0) circle (3.25);
        \node[dot] (1) at (180:1) {};
        \node[left] at (1) {$a_3(t)$};
        \node[dot] (2) at (90:1) {};
        \node[above] at (2) {$a_4(t)$};
        \node[dot] (3) at (0:1) {};
        \node[right] at (3) {$a_1(t)$};
        \node[dot] (4) at (270:1) {};
        \node[below] at (4) {$a_2(t)$};
        \node at (0,-4) {Shortly after $t=0$};
    \end{scope}
\end{tikzpicture}
\]
\caption{Left centers in a small neighborhood $U$ of $a$}
\label{fig: crossingawall}
\end{figure}
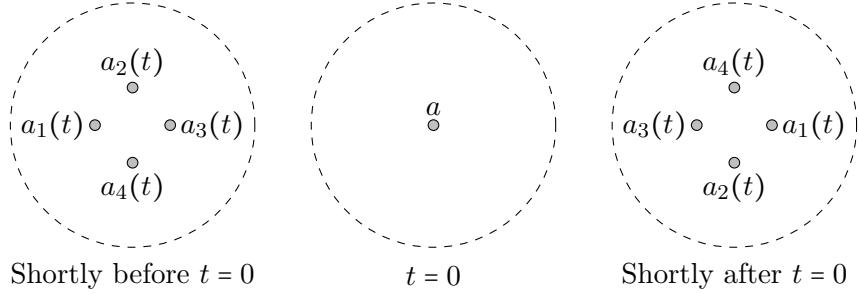

By Proposition \ref{prop: Voronoivertex}, the left centers $a_i(t)$ are vertices of the $k$th order Voronoi decomposition if and only if $C_i(t)$ has $k-2$ or $k-1$ many dots on its left side.
Let $D_-$ denote the dots which are on the left side of $C$; for small enough $\epsilon$, these dots are also on the left side of $C_i(t)$ for all $i\in \{1,2,3,4\}$ and $t\in [-\epsilon,\epsilon]$.

\begin{itemize}
    \item Consider $t\in [-\epsilon,\epsilon]$ for which the dot $d_1(t)$ is on the left side of $C_1(t)$. Then $d_2(t)$ is on the right side of $C_2(t)$, $d_3(t)$ is on the left side of $C_3(t)$, and $d_4(t)$ is on the right side of $C_4(t)$.
    
    The only way for any of the $a_i(t)$ to be vertices of the $k$th order Voronoi decomposition is if $|D_-|$ is between $k-3$ and $k-1$, in which case the restriction to the neighborhood $U$ will be equivalent to one of the pictures in Figure \ref{fig: d1left}.
    

\begin{figure}[h!t]
\[
\begin{tikzpicture}[scale=.5]
    \begin{scope}
        \node at (0,-4) {$|D_-|=k-3$};
        \draw[dashed] (0,0) circle (3.25);
        \clip (0,0) circle (3.25);
        \node[dot,thick,fill=white] (1) at (180:1) {};
        \node[left] at (1) {$a_1(t)$};
        \node[dot,thick,fill=white] (3) at (0:1) {};
        \node[right] at (3) {$a_3(t)$};

        \draw[thick] (1) to (3);
        \draw[thick] (1) to (-5,5);
        \draw[thick] (1) to (-5,-5);
        \draw[thick] (3) to (5,5);
        \draw[thick] (3) to (5,-5);
    \end{scope}
    \begin{scope}[xshift=8cm]
        \node at (0,-4) {$|D_-|=k-2$};
        \draw[dashed] (0,0) circle (3.25);
        \clip (0,0) circle (3.25);
        \node[dot,thick,fill=black] (1) at (180:1) {};
        \node[above left] at (1) {$a_1(t)$};
        \node[dot,thick,fill=white] (2) at (90:1) {};
        \node[above right] at (2) {$a_2(t)$};
        \node[dot,thick,fill=black] (3) at (0:1) {};
        \node[below right] at (3) {$a_3(t)$};
        \node[dot,thick,fill=white] (4) at (270:1) {};
        \node[below left] at (4) {$a_4(t)$};

        \draw[thick] (1) to (2);
        \draw[thick] (2) to (3);
        \draw[thick] (3) to (4);
        \draw[thick] (4) to (1);
        \draw[thick] (1) to (-5,0);
        \draw[thick] (2) to (0,5);
        \draw[thick] (3) to (5,0);
        \draw[thick] (4) to (0,-5);
    \end{scope}
    \begin{scope}[xshift=16cm]
        \node at (0,-4) {$|D_-|=k-1$};
        \draw[dashed] (0,0) circle (3.25);
        \clip (0,0) circle (3.25);
        \node[dot,thick,fill=black] (2) at (90:1) {};
        \node[ right] at (2) {$a_2(t)$};
        \node[dot,thick,fill=black] (4) at (270:1) {};
        \node[ left] at (4) {$a_4(t)$};

        \draw[thick] (2) to (4);
        \draw[thick] (2) to (-5,5);
        \draw[thick] (4) to (-5,-5);
        \draw[thick] (2) to (5,5);
        \draw[thick] (4) to (5,-5);
    \end{scope}
\end{tikzpicture}
\]
\caption{Local structure of $k$th order Voronoi decomp, when $d_1(t)$ is left of $C_1(t)$}
\label{fig: d1left}
\end{figure}
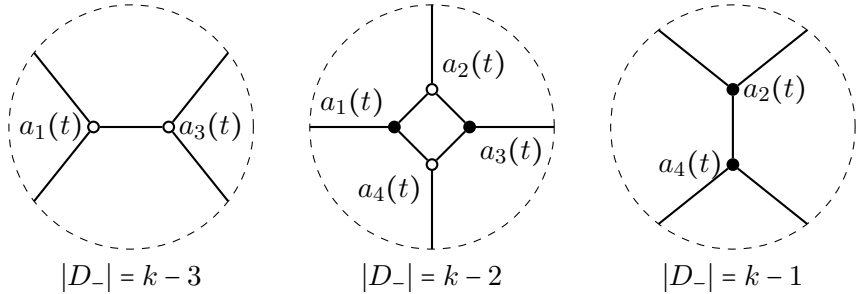
    \item Consider $t\in [-\epsilon,\epsilon]$ for which the dot $d_1(t)$ is on the right side of $C_1(t)$. Then $d_2(t)$ is on the left side of $C_2(t)$, $d_3(t)$ is on the right side of $C_3(t)$, and $d_4(t)$ is on the left side of $C_4(t)$.
    
    The only way for any of the $a_i(t)$ to be vertices of the $k$th order Voronoi decomposition is if $|D_-|$ is between $k-3$ and $k-1$, in which case the restriction to the neighborhood $U$ will be equivalent to one of the pictures in Figure \ref{fig: d1right}.
    

\begin{figure}[h!t]
\[
\begin{tikzpicture}[scale=.5]
    \begin{scope}
        \node at (0,-4) {$|D_-|=k-3$};
        \draw[dashed] (0,0) circle (3.25);
        \clip (0,0) circle (3.25);
        \node[dot,thick,fill=white] (2) at (90:1) {};
        \node[ right] at (2) {$a_2(t)$};
        \node[dot,thick,fill=white] (4) at (270:1) {};
        \node[ left] at (4) {$a_4(t)$};

        \draw[thick] (2) to (4);
        \draw[thick] (2) to (-5,5);
        \draw[thick] (4) to (-5,-5);
        \draw[thick] (2) to (5,5);
        \draw[thick] (4) to (5,-5);
    \end{scope}
    \begin{scope}[xshift=8cm]
        \node at (0,-4) {$|D_-|=k-2$};
        \draw[dashed] (0,0) circle (3.25);
        \clip (0,0) circle (3.25);
        \node[dot,thick,fill=white] (1) at (180:1) {};
        \node[above left] at (1) {$a_1(t)$};
        \node[dot,thick,fill=black] (2) at (90:1) {};
        \node[above right] at (2) {$a_2(t)$};
        \node[dot,thick,fill=white] (3) at (0:1) {};
        \node[below right] at (3) {$a_3(t)$};
        \node[dot,thick,fill=black] (4) at (270:1) {};
        \node[below left] at (4) {$a_4(t)$};

        \draw[thick] (1) to (2);
        \draw[thick] (2) to (3);
        \draw[thick] (3) to (4);
        \draw[thick] (4) to (1);
        \draw[thick] (1) to (-5,0);
        \draw[thick] (2) to (0,5);
        \draw[thick] (3) to (5,0);
        \draw[thick] (4) to (0,-5);
    \end{scope}
    \begin{scope}[xshift=16cm]
        \node at (0,-4) {$|D_-|=k-1$};
        \draw[dashed] (0,0) circle (3.25);
        \clip (0,0) circle (3.25);
        \node[dot,thick,fill=black] (1) at (180:1) {};
        \node[left] at (1) {$a_1(t)$};
        \node[dot,thick,fill=black] (3) at (0:1) {};
        \node[right] at (3) {$a_3(t)$};

        \draw[thick] (1) to (3);
        \draw[thick] (1) to (-5,5);
        \draw[thick] (1) to (-5,-5);
        \draw[thick] (3) to (5,5);
        \draw[thick] (3) to (5,-5);
    \end{scope}
\end{tikzpicture}
\]
\caption{Local structure of $k$th order Voronoi decomp, when $d_1(t)$ is right of $C_1(t)$}
\label{fig: d1right}
\end{figure}
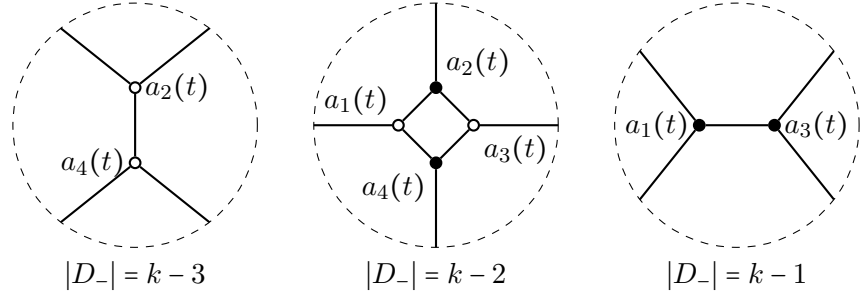
    
\end{itemize}

If $d_1(t)$ switches sides of $C_1(t)$ as $t$ crosses $0$, then the $k$th order Voronoi decomposition changes by one of the local moves in Figure \ref{fig: postnikovmoves}. If $d_1(t)$ stays on the same side of $C_1(t)$ as $t$ crosses $0$, then the $k$th order Voronoi decompositions before and after are equivalent.


Next, let $\overline{C}_i(t)$ denote the orientation-reversal of $C_i(t)$ for each $i$. The left center of $\overline{C}_i(t)$ is the right center of $C_i(t)$, which is the antipode $\sigma(a_i(t))$ of the left center $a_i(t)$. Just like Figure \ref{fig: crossingawall}, the four antipodes $\sigma(a_1(p)),\sigma(a_1(p)),\sigma(a_1(p)),\sigma(a_1(p))$ come together to single point $\sigma(a)$ before splitting again, and therefore the Voronoi decomposition changes by a local move or remains equivalent.

For any triple of dots besides the four triples in $\{d_1,d_2,d_3,d_4\}$, the corresponding incident circle does not cross a dot, and therefore has the same set of dots on either side for all $t$. As a consequence, the corresponding vertices in the $k$th order Voronoi decomposition and their adjacencies to other vertices do not change as $t$ varies. Therefore, the $k$th order Voronoi decomposition remains equivalent outside the small neighborhoods of $a$ and $\sigma(a)$.
\end{proof}

\begin{rem}
The proof demonstrates that moves occur in pairs only when $k$ is close to $\frac{1}{2}n$. When $k=\frac{1}{2}n$, then every move must occur in antipodal pairs (to preserve the symmetry from Proposition \ref{prop: antipodal}). When $n=2k\pm1$, then only some moves will occur in pairs; specifically, every square move will be accompanied by one of the merging-splitting moves.
\end{rem}

Proposition \ref{prop: semigeneral} and Lemma \ref{lemma: move} immediately imply the following.

\begin{rethm}{\ref{thm: move equiv}}
Let $0<k<n>3$. The $k$th order Voronoi decompositions of any two sets of $n$-many dots in general position in the sphere may be related by a sequence of the local moves in Figure \ref{fig: postnikovmoves}.
\end{rethm}

The moves in Figure \ref{fig: postnikovmoves} between bicolored graphs appeared in the seminal work of Postnikov, where such moves induced an equivalence between many associated constructions, such as \emph{boundary measurement maps}, \emph{trip permutations}, and \emph{cluster algebras}. While the first two constructions do not have an obvious generalization to bicolored graphs in the sphere, we briefly consider the associated cluster algebra in Section \ref{sec: future}. 

\begin{rem}
Theorem \ref{thm: move equiv} provides a second proof that the numbers of black vertices, white vertices, edges, and regions in a spherical $k$th order Voronoi decomposition only depends on $k$ and $n$, since the local moves preserve these counts. While it doesn't provide any counts, one could pick a particular nice configuration and count strata there to (re)deduce the formulas in Theorem \ref{thm: stratacount}. A proof in this spirit of the $k=\ell$ case of  Theorem \ref{thm: incident} is given in \cite{Ard04}.
\end{rem}

\begin{warn}
We caution the reader that some Postnikov moves cannot be realized by a semigeneral family! The most obvious obstruction is that, when $k=\frac{1}{2}n$, the $k$th order Voronoi decomposition is antipodally symmetric, and so semigeneral homotopies \emph{must} induce local moves in antipodal pairs. However, one may construct counterexamples for other values of $k$ as well.
\end{warn}

\section{Future directions}\label{sec: future}

We close by sketching some of the further directions one may take these ideas in, particularly those directions opened up by Theorem \ref{thm: move equiv}.

In \cite{Pos}, the author shows how a bicolored graph in a surface can be used to define a \emph{cluster algebra}, a special kind of commutative ring with distinguished elements called \emph{cluster variables} (a more modern exposition of this construction can be found in \cite{FWZdd}).
Given a configuration $D$ of $n$-many dots in general position in the sphere $S$, 
let $\mathcal{A}_\circlearrowleft(k,D)$ denote the cluster algebra associated to the $k$th order Voronoi decomposition of $S$ determined by $D$.

Since the moves in Figure \ref{fig: postnikovmoves} each induce an isomorphism between the associated cluster algebras \cite[Proposition 7.2.2]{FWZdd}, Theorem \ref{thm: move equiv} immediately implies the following.

\begin{prop}
Let $0<k<n>3$, and let $D, D'$ be configurations of $n$-many dots in general position in $S$.
Then a semigeneral family from $D$ to $D'$ induces an isomorphism of cluster algebras
\[
\mathcal{A}_\circlearrowleft(k,D) \rightarrow \mathcal{A}_\circlearrowleft(k,D')
\]
In particular, the isomorphism class of $\mathcal{A}_\circlearrowleft(k,D)$ only depends on $k$ and $n$.
\end{prop}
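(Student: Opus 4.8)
The plan is to recognize that this Proposition is an essentially formal consequence of results already established in the excerpt, so the ``proof'' should simply assemble them in the right order rather than prove anything genuinely new. First I would invoke Proposition~\ref{prop: semigeneral}: since $D$ and $D'$ are both in general position, there is a semigeneral family $D(t)$ connecting them which is in general position for all but finitely many $t$. Next I would observe that the finitely many non-general parameters $t_1 < t_2 < \dots < t_m$ partition the family into finitely many sub-families, each of which crosses exactly one semigeneral wall; by Lemma~\ref{lemma: move}, the $k$th order Voronoi decomposition on either side of each $t_i$ is related by either a single Postnikov move or a pair of antipodal Postnikov moves. So the decompositions of $D$ and $D'$ (which are the decompositions at the two ends of the family, stable under the general-position deformations in between since those induce no qualitative change) are related by a finite sequence of the moves in Figure~\ref{fig: postnikovmoves} --- this is exactly Theorem~\ref{thm: move equiv}, so really the first half of the argument is just a citation of that theorem.

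The second half is to promote this combinatorial equivalence to an algebraic one. Here I would cite \cite[Proposition 7.2.2]{FWZdd}, which says each of the three local moves in Figure~\ref{fig: postnikovmoves} induces an isomorphism of the associated cluster algebras (square moves correspond to mutation, and the contraction/expansion moves do not change the cluster algebra up to isomorphism). Composing these isomorphisms along the sequence of moves produced by the semigeneral family yields an isomorphism $\mathcal{A}_\circlearrowleft(k,D) \to \mathcal{A}_\circlearrowleft(k,D')$. The final sentence, that the isomorphism class depends only on $k$ and $n$, is then immediate: any two configurations of $n$ dots in general position in $S$ can be joined by a semigeneral family (Proposition~\ref{prop: semigeneral} again, with no constraint on which components of $\Conf_n^g(S)$ the endpoints lie in), so all the cluster algebras $\mathcal{A}_\circlearrowleft(k,D)$ for fixed $k,n$ are mutually isomorphic.

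The one point that needs a word of care --- and which I'd expect to be the only real ``obstacle'' --- is the bookkeeping around the antipodal pairs of moves in case (2) of Lemma~\ref{lemma: move}: one should note that performing two Postnikov moves still induces a (composite) isomorphism of cluster algebras, so case (2) is handled by applying \cite[Proposition 7.2.2]{FWZdd} twice. There is also a minor subtlety that Lemma~\ref{lemma: move} as stated assumes the family is general except at one value of $t$, whereas a general semigeneral family has several such values; this is resolved by chopping the family at the non-general parameters and applying the lemma to each piece, using that the decomposition is locally constant (up to the deformation homeomorphism) on each open interval of general-position parameters. Neither of these is deep, so the whole proof is short: it is a two-line deduction from Theorem~\ref{thm: move equiv} and \cite[Proposition 7.2.2]{FWZdd}, with the ``depends only on $k$ and $n$'' clause following because the relation ``connected by a semigeneral family'' is all-to-all on $\Conf_n^g(S)$.
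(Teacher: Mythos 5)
Your proposal is correct and matches the paper's argument, which likewise derives the Proposition immediately from Theorem \ref{thm: move equiv} together with \cite[Proposition 7.2.2]{FWZdd}. The extra bookkeeping you note (composing isomorphisms across several wall crossings and across antipodal pairs of moves) is implicit in the paper's one-line deduction and is handled correctly in your write-up.
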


As we hope to show in a future work, this isomorphism only depends on the semigeneral family up to homotopy in $\Conf_n(S)$. As a consequence, elements in the fundamental group
\[
\pi_1(\Conf_n(S),D)
\]
induce well-defined automorphisms of $\mathcal{A}_\circlearrowleft(k,D)$. This fundamental group is called the \emph{$n$th spherical braid group}, and it can be identified with the \emph{mapping class group of the punctured sphere $S\smallsetminus D$}. Using this action, one can show that every homotopy class of simple, \emph{oriented} curve in $S\smallsetminus D$ with $k$-many dot on its left side determines a cluster variable in $\mathcal{A}_\circlearrowleft(k,D)$.


This suggests a natural variant for unoriented curves. Recall that, when $2k=|D|$, the $k$th order Voronoi decomposition has an antipodal symmetry. Let $\mathcal{A}_\bigcirc(k,D)$ be the cluster algebra of the quotient of this Voronoi decomposition by the antipodal symmetry\footnote{Or equivalently, as a \emph{fold} of the cluster algebra $\mathcal{A}_\circlearrowleft(k,D)$ by the antipodal symmetry (in the sense of \cite{Dup08}).}; for all other choices of $k$, let $\mathcal{A}_\bigcirc(k,D) :=\mathcal{A}_\circlearrowleft(k,D)$.  As before, the isomorphism class of $\mathcal{A}_\bigcirc(k,D)$ only depends on $k$ and $n:=|D|$, and one can show that every homotopy class of simple, \emph{unoriented} curve in $S\smallsetminus D$ with $k$-many dots on one of its sides determines a cluster variable in $\mathcal{A}_\bigcirc(k,D)$.

\begin{rem}
For small values of $n$, the cluster algebras $\mathcal{A}_\bigcirc(k,D)$ are of special and notable types. 

When $|D|=4$, the $\mathcal{A}_\bigcirc(2,D)$ is isomorphic to the \emph{Markov cluster algebra} \cite{BFZ05}, which is related to the Teichm\"uller space of the once-punctured torus and the theory of Markov triples. 

When $|D|=6$, the cluster algebra $\mathcal{A}_\bigcirc(3,D)$ is isomorphic to the \emph{$X_7$ cluster algebra}, an exotic and largely mysterious cluster algebra which appears as one of two sporadic cases in the classification of \emph{mutation-finite} cluster algebras \cite{DO08}. This realization (particularly, the action of the mapping class group) has non-trivial consequences for the structure of this cluster algebra, as well as for the combinatorics of separating curves in the closed surface of genus 2. We hope to explore these consequences in a future work.
\end{rem}


\appendix

\section{Connection between spherical and planar Voronoi decompositions}\label{appendix}

A curious feature of our proofs of Theorems \ref{thm: incident} and \ref{thm: avoidant} is they are both fundamentally spherical, even though the ultimate results are equally valid in the sphere and the plane.
One explanation is that our proofs boil down to counting various strata in the $k$th order Voronoi decomposition of the sphere (Theorem \ref{thm: stratacount}), and stereographic projection does \emph{not} send Voronoi decompositions to Voronoi decompositions. 
While stereographic projection sends circles to circles, it does not send the center of a circle to the center of its image.

However, there is a more subtle relationship between higher order Voronoi decompositions in the plane and sphere, which we sketch here for the benefit of the reader. In essence, the spherical $k$th order Voronoi decomposition can be constructed by gluing together the planar $k$th order Voronoi decomposition and the planar $(n-k)$th order Voronoi decomposition.

\begin{thm}[\cite{Brown79,NLC02,CHP22}]
Let $D$ be a configuration of dots in the sphere $S$, and let $\pi(D)$ be the stereographic projection in the plane $P$.
\begin{enumerate}
    \item There is a simple, piecewise-circular curve $R_0$ in $S$, disjoint from the vertices of the $k$th order Voronoi decomposition of $S$ determined by $D$, whose complement consists of two open regions $R_+$ and $R_-$.
    \item There is a homeomorphism from $R_+$ to the plane $P$ which induces a color-preserving equivalence between the $k$th order Voronoi decomposition of $S$ determined by $D$ (as restricted to $R_+$) and the $k$th order Voronoi decomposition of $P$ determined by $\pi(D)$.
    \item There is a homeomorphism from $R_-$ to the plane $P$ which induces a color-swapping equivalence between the $k$th order Voronoi decomposition of $S$ determined by $D$ (as restricted to $R_-$) and the $(n-k)$th order Voronoi decomposition of $P$ determined by $\pi(D)$.
\end{enumerate}
\end{thm}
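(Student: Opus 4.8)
\textbf{Proof proposal for the Theorem of \cite{Brown79,NLC02,CHP22}.}

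The plan is to realize the dividing curve $R_0$ explicitly as the set of points $p \in S$ for which the point $p_\infty$ (the center of stereographic projection) is at best tied for $k$th closest and at worst tied for $(k+1)$st closest — in other words, the locus where $p_\infty$ sits exactly on the boundary between the ``$k$-closest'' dots and the ``$(k+1)$-furthest'' dots. Concretely, for each dot $d$ let $H_d \subset S$ be the perpendicular bisector great circle between $d$ and $p_\infty$; then $R_0$ should be the piecewise-great-circular curve consisting, on each open region of the $k$th order Voronoi decomposition, of an arc of the bisector $H_d$ where $d$ is the ``borderline'' dot for that region relative to $p_\infty$. First I would check that this $R_0$ is a simple closed curve disjoint from the vertices: disjointness from vertices follows from general position (a vertex already has three dots tied, so $p_\infty$ cannot also participate in a tie there without four dots being cocircular through the incident circle's center), and that it separates $S$ into exactly two disks $R_-$ (where $p_\infty$ is among the $k$ closest dots) and $R_+$ (where $p_\infty$ is among the $n-k$ furthest). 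This uses the same hemisphere-intersection convexity analysis from the proof of Proposition \ref{prop: stratumtypes}.

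Next I would construct the homeomorphism $R_+ \to P$. The natural candidate is stereographic projection $\pi$ itself, restricted to $R_+$. The key computation is that, for a point $p \in R_+$ and two dots $d, d' \in D$ both \emph{not} equal to $p_\infty$ in the relevant sense, the comparison $\dis_S(p,d) \lessgtr \dis_S(p,d')$ translates under $\pi$ into the comparison $\dis_P(\pi(p),\pi(d)) \lessgtr \dis_P(\pi(p),\pi(d'))$ — but only when $p$ is on the correct side (the $R_+$ side) of the relevant bisectors involving $p_\infty$. This is where the classical fact that stereographic (equivalently, inverse) projection is conformal and maps circles to circles gets used: the perpendicular bisector great circle $H_{d,d'}$ in $S$ between two dots maps to a circle in $P$, and one shows this image circle is exactly the planar perpendicular bisector of $\pi(d)$ and $\pi(d')$ \emph{after} intersecting with $R_+$, because on $R_+$ the point $p_\infty$ plays no role in the ordering. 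Hence the ordering of dots by distance from $p$ agrees with the ordering of $\pi(d)$ by planar distance from $\pi(p)$, so $D_-(p), D_+(p)$ match $D_-(\pi(p)), D_+(\pi(p))$ in the planar $k$th order decomposition; since the coloring rule (Proposition \ref{prop: blackwhite3}) is phrased purely in terms of these sets, the equivalence is color-preserving.

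For $R_-$, the same map $\pi$ works but now $p_\infty$ \emph{is} among the $k$ closest dots to every $p \in R_-$, so when we pass to the plane and forget $p_\infty$, the planar point $\pi(p)$ sees the remaining $n-1$ dots with $p_\infty$ ``removed from the bottom'' of the distance ordering; equivalently, reversing the distance ordering (which is what happens to the non-$p_\infty$ dots under the relevant sign flip, analogous to the antipodal argument in Proposition \ref{prop: antipodal}) shows $\pi(p)$ lands in the $(n-k)$th order planar Voronoi region whose defining sets are the complements of $D_\pm(p) \setminus \{p_\infty\}$. Since white/black in the $k$th order decomposition is defined by whether there are $k-2$ or $k-1$ dots strictly inside on the left, and complementation swaps these roles, the equivalence is color-swapping.

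I expect the main obstacle to be item (2) — specifically, pinning down that the restriction of the spherical bisector $H_{d,d'}$ to $R_+$ really maps onto the restriction of the \emph{planar} bisector to $\pi(R_+)$, rather than onto some other circle. The subtlety is exactly the one flagged in the Warning in the excerpt: $\pi$ does not commute with taking perpendicular bisectors globally. The resolution is that one does not need global agreement of bisectors, only agreement of the induced \emph{cell decomposition}, and that agreement holds on $R_+$ because there the finitely many bisectors $H_{p_\infty, d}$ all lie on the $R_0$ side and thus do not cut $R_+$ into pieces — so inside $R_+$ only the bisectors $H_{d,d'}$ with $d,d'\neq p_\infty$ matter, and for those the conformality/circle-preservation of $\pi$ suffices once one checks which of the two arcs of the image circle is the planar bisector (a sign check at one convenient point). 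Managing this bookkeeping cleanly, and checking that $R_0$ is genuinely simple (no self-crossings as one moves from region to region), are the two places where care is required; everything else is a direct unwinding of the definitions in Section \ref{sec: higher Voronoi}.
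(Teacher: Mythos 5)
First, a point of comparison: the paper itself does not prove this theorem — it is stated in the appendix as imported background, attributed to \cite{Brown79,NLC02,CHP22}, with only a historical note on which parts appear where. So your proposal can only be judged on its own merits, and as written it has a genuine gap at its central step. You take the homeomorphism $R_+\to P$ in part (2) to be stereographic projection $\pi$ itself, and argue that conformality and circle-preservation make it carry the spherical decomposition to the planar one. This cannot work, for exactly the reason flagged in the paper's own Warning: $\pi$ sends circles to circles but does \emph{not} send the center of a circle to the center of its image. The vertices of the spherical $k$th order decomposition lying in $R_+$ are spherical centers of incident circles, while the vertices of the planar $k$th order decomposition are planar circumcenters of the projected circles; $\pi$ does not carry the former to the latter, so $\pi|_{R_+}$ does not even send vertices to vertices, let alone strata to strata. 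The same failure occurs along edges: the spherical bisector of two dots $d,d'$ is a great circle whose image under $\pi$ is generically a circle, whereas the planar bisector of $\pi(d),\pi(d')$ is a line, and these loci genuinely differ inside $\pi(R_+)$. Your proposed resolution — that inside $R_+$ only bisectors not involving $p_\infty$ matter, and for those conformality suffices — does not address this, since the discrepancy already arises for a single pair of ordinary dots; conformality preserves angles, not distance comparisons. What is actually true, and what the cited proofs exploit, is a purely combinatorial correspondence: an incident circle $C$ has $j$ dots on its side away from $p_\infty$ if and only if $\pi(C)$ has $j$ dots in its bounded disk, so spherical vertices that are centers on the far side of their circles match planar order-$k$ vertices with the same color, and centers on the $p_\infty$ side match planar order-$(n-k)$ vertices with colors swapped. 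The homeomorphism must then be assembled cell-by-cell from this correspondence (or via the explicit maps of \cite{NLC02}); it is not $\pi$.

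Two secondary issues. Your argument for part (3) via "reversing the distance ordering" at a point of $R_-$ does not describe the real mechanism, which is the inside/outside swap just described: $k-2$ or $k-1$ dots on the $p_\infty$ side of $C$ means $n-k-1$ or $n-k-2$ dots in the bounded disk of $\pi(C)$, whence the color swap. And the claims that $R_0$ is a single simple closed curve and that $R_\pm$ are disks do not follow from the convexity of individual Voronoi cells — a union of convex cells need not be connected, let alone simply connected — so connectivity of $R_\pm$ needs its own argument (e.g., star-shapedness of the locus where $p_\infty$ is among the $k$ nearest of $D\cup\{p_\infty\}$). Your identification of $R_0$ as the locus where $p_\infty$ is tied at the $k$/$(k{+}1)$ threshold, and your observation that $R_0$ avoids the vertices precisely when $p_\infty$ lies on no incident circle (equivalently, $\pi(D)$ is in general position), are both sound and are the right starting point.
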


The first version of this appeared in \cite{Brown79}, who proved parts (1) and (2) for $k=1$ to derive ($k=1$) Voronoi decompositions in the plane from (computationally faster) ($k=1$) Voronoi decompositions in the sphere. Part (3) of the $k=1$ case was shown in \cite{NLC02} with explicit maps. The general $k$ version appears in \cite{CHP22}, who used it to deduce the number of incident and avoidant circles from analogous results on planar Voronoi decompositions due to \cite{Lin03}.

\begin{rem}
The theorem implies that the number of white vertices in a spherical $k$th order Voronoi decomposition is equal to the number of white vertices in the corresponding planar $k$th order Voronoi decomposition plus the number of black vertices in the planar $(n-k)$th order Voronoi decomposition. Translating into circles, this is equivalent to a simple observation: given a configuration of $n$-many dots in general position in the plane, the number of incident circles with $(k-2)$-many dots on either side is equal to the number of incident circles with $(k-2)$-many dots in the interior plus the number of incident circles with $(n-k-1)$-many dots in the interior.
\end{rem}



\bibliographystyle{alpha}
\bibliography{circles}

\end{document}